\newtheorem{theorem}{Theorem}[section]
\theoremstyle{plain}
\newtheorem{corollary}[theorem]{Corollary}
\newtheorem{lemma}[theorem]{Lemma}
\newtheorem{problem}[theorem]{Problem}
\newtheorem{proposition}[theorem]{Proposition}
\numberwithin{equation}{section}
\def\inn{\operatorname{Inn}}
\def\mlt{\operatorname{Mlt}}
\def\aut{\operatorname{Aut}}
\def\endom{\operatorname{End}}
\newcommand{\ext}[2]{\rtimes_{#1}^{#2}}
\title[Loops with commuting inner mappings]{Explicit constructions of loops with commuting inner mappings}
\author{Ale\v{s} Dr\'apal}
\email[Dr\'apal]{drapal@karlin.mff.cuni.cz}
\address[Dr\'apal]{Department of Algebra, Charles University, Sokolovsk\'a 83, 186 75 Prague,
Czech Republic}
\author{Petr Vojt\v{e}chovsk\'y}
\email[Vojt\v{e}chovsk\'y]{petr@math.du.edu}
\address[Vojt\v{e}chovsk\'y]{Department of Mathematics, University of Denver, 2360 S Gaylord St,
Denver, Colorado 80208, USA}
\thanks{The paper was written while the first author was
visiting University of Wisconsin-Madison as a Fulbright research scholar. He
was also supported by institutional grant MSM 0021620839.}
\keywords{loop, central nilpotency, group, inner mapping group, group of inner
automorphisms, symmetric trilinear form, nilpotency class three}
\subjclass{20N05}
\begin{document}

\begin{abstract}
In 2004, Cs\"{o}rg\H{o} constructed a loop of nilpotency class three with
abelian group of inner mappings. Until now, no other examples were known. We
construct many such loops from groups of nilpotency class two by replacing the
product $xy$ with $xyh$ in certain positions, where $h$ is a central
involution. The location of the replacements is ultimately governed by a
symmetric trilinear alternating form.
\end{abstract}

\maketitle

\section{Introduction}

As is well known, a group is of nilpotency class at most two if and only if its
inner automorphism group is abelian. In $1946$, Bruck published a long paper
\cite{BruckTAMS} that influenced the development of loop theory for decades, in
which he observed that a loop of nilpotency class two possesses an abelian
inner mapping group. This paper is concerned with the converse of Bruck's
result.

While working on this problem in the early nineties, Kepka and Niemenmaa
\cite{NiemenmaaKepkaJAlg}, \cite{NiemenmaaKepka} proved that a finite loop with
abelian inner mapping group must be nilpotent. (Kepka later improved upon this
result and showed that if the inner mapping group is abelian and finite, then
the loop is nilpotent \cite{Kepka}.) But they did not establish an upper bound
on the nilpotency class of the loop, and, indeed, no such bound is presently
known.

Many experts believed that the converse of Bruck's result holds, just as in the
associative case. But in $2004$ (the result was published in $2007$),
Cs\"{o}rg\H{o} \cite{Csorgo} constructed a counterexample---a loop of
nilpotency class three with an abelian group of inner mappings. Throughout the
paper, we denote her loop by $C$.

Like Kepka and Niemenmaa, Cs\"{o}rg\H{o} has been using the technique of
$H$-connected transversals of groups, and her counterexample is therefore fully
embedded in group theory. She constructed a group $G$ of order $8192$ with a
subgroup $H$ of order $64$ and a two-sided transversal $A$ for $H$ in $G$ on
which one can define a loop (of order $128$) by $aH \cdot bH = cH$ if and only
if $abH = cH$. As is acknowledged in \cite{Csorgo}, her goal was to prove the
converse of Bruck's result, and she was gradually accumulating properties of a
minimal counterexample so that its existence could be refuted. But, in a twist
of events, she ended up constructing a counterexample.

Unfortunately, her approach did not lead to a general theory or construction
method that would allow one to obtain additional examples, much less to decide
how rare such examples are. Furthermore, the method of implicit construction by
means of transversals makes even the task of explicitly formulating the loop
operation somewhat nontrivial.

In this paper we deprive Cs\"{o}rg\H{o}'s example of its solitary nature. We
construct $C$ in two different, explicit ways, and, more importantly, we show a
general method that yields many other similar loops. The structure of the paper
corresponds quite closely to the history of our investigation, and we shall now
briefly describe both.

Using the GAP \cite{GAP} package LOOPS \cite{LOOPS}, we have constructed the
multiplication table of $C$, based on the description in \cite{Csorgo}, and
determined the sizes of the nuclei and of the associator subloop. We were quite
surprised that the latter consists of only two elements. This means that each
nontrivial associator in $C$ is equal to a central involution $h$. This led to
an early conjecture that $C$ can be obtained by the method of group table
modifications, as used in our earlier work \cite{DrapalEJC}, \cite{Drapal},
\cite{DrapalVojtechovsky}, \cite{Vojtechovsky}. More precisely, we conjectured
that there exists a group---we shall denote it again by $G$---such that
$C=(G,*)$, where $x*y \in \{xy, xyh\}$, for a fixed central involution $h \in
G$.

We were now facing two tasks: to determine $G$, and to identify those pairs
$(x,y)$, where the group operation should be modified. It is to be expected
that the modification is performed in a blockwise fashion, with $(x',y')$ and
$(x,y)$ behaving in the same way whenever $x'K=xK$ and $y'K=yK$, for a (large)
normal subgroup $K$ of $G$.

Our first reconstruction of $C$ has been obtained by different means, though.
In Section \ref{Sc:Extensions}, we develop the theory of nuclear extensions for
loops, which allows us to give an explicit formula for $C$. Since we were not
able to guess $G$ from this formula, we applied a greedy algorithm (whose
purpose was to maximize the number of associating triples) that resulted in
another loop $\overline C$ with similar properties. The explicit formula for
$\overline C$ is simple enough to connect it with a group $G$, which we
demonstrate in Section \ref{Sc:Crosshomomorphisms}.

By studying this single example $\overline C$, we developed a theory described
in Section \ref{Sc:Modifications}. The construction starts with a group $G$ of
nilpotency class at most three and produces a loop of nilpotency class three
with abelian inner mapping group. At first we thought that the process cannot
work if the starting group $G$ is of nilpotency class two, and when we tried to
refute this possibility we obtained some theorems that connect these groups to
triadditive (or trilinear) mappings via the iterated commutator $[[-,-],-]$.

It turns out that the sought loops can be, in fact, obtained from groups $G$ of
nilpotency class two, cf. Section \ref{Sc:NilpClassTwo}, and we were able to
reconstruct the loop $C$ in this way. In hindsight, our inability to do so in
the first place got a natural explanation: the subgroup $K$ that is used for
blockwise modifications is of order $2$.

The general construction (of obtaining loops from groups of nilpotency class
two) has three steps. The first step is strictly governed by the associated
group and the triadditive mapping. However, the second and the third steps
depend on many free parameters, which results in a combinatorial explosion.
Consequently, there are myriads (we do not know precisely how many) of loops of
order $128$, of nilpotency class three and with an abelian inner mapping group.
The general construction is given in Section \ref{Sc:NilpClassTwo}, and
explicit examples are calculated in Section \ref{Sc:Explicit}.

We conclude the paper with a list of open problems.

The paper relies heavily on machine computation, and all results not justified
by theory have been checked computationally. The GAP code used here can be
downloaded at \texttt{http://www.math.du.edu/\~{}petr} in section Publications.

In the planned sequel, we shall explain why our method cannot work for orders
less than $128$, and why it cannot work for loops of odd order.

\subsection{Background on loops}

A groupoid $Q$ is a \emph{loop} if the equations $ax=b$, $ya=b$ have unique
solutions $x$, $y\in Q$ whenever $a$, $b\in Q$ are given, and if there is $1\in
Q$, the \emph{neutral element} of $Q$, such that $a1=a=1a$ for every $a\in Q$.
A nonempty subset $S$ of a loop $Q$ is a \emph{subloop}, $S\le Q$, if $1\in S$
and $S$ is a loop with respect to the multiplication inherited from $Q$. We say
that $S\le Q$ is \emph{normal} in $Q$, $S\unlhd Q$, if $xS=Sx$, $x(yS)=(xy)S$,
$S(xy)=(Sx)y$ for every $x$, $y\in Q$.

Every element $x$ of a loop $Q$ gives rise to two permutations of $Q$, the
\emph{left translation} $L_x:y\mapsto xy$, and the \emph{right translation}
$R_x:y\mapsto yx$. The \emph{multiplication group} $\mlt Q$ of $Q$ is the group
generated by $\{L_x$, $R_x;\;x\in Q\}$. The mappings
\begin{displaymath}
    L(x,y) = L_{yx}^{-1}L_yL_x,\quad R(x,y) = R_{xy}^{-1}R_yR_x,\quad
    T(x) = R_x^{-1}L_x
\end{displaymath}
are known as \emph{left}, \emph{right}, and \emph{middle inner mappings},
respectively, and the \emph{inner mapping group} $\inn Q$ of $Q$ is the group
generated by all inner mappings of $Q$. In a complete analogy with groups, a
subloop $S$ of $Q$ is normal in $Q$ if and only if $\varphi(S)=S$ for every
$\varphi\in\inn Q$.

The \emph{commutator} of $x$, $y\in Q$ is defined by $xy = yx\cdot[x,y]$, and
the \emph{associator} of $x$, $y$, $z\in Q$ is defined by $(xy)z = x(yz)\cdot
[x,y,z]$. The \emph{associator subloop} of $Q$ is the smallest normal subloop
$A(Q)$ of $Q$ such that $Q/A(Q)$ is a group. In particular, $[x,y,z]\in A(Q)$
for every $x$, $y$, $z\in Q$.

The \emph{left nucleus} $N_\lambda(Q)$ of a loop $Q$ consists of all elements
$x\in Q$ such that $[x,y,z]=1$ for every $y$, $z\in Q$. Similarly, we have the
\emph{middle nucleus} $N_\mu(Q) = \{x\in Q;\; [y,x,z]=1$ for every $y$, $z\in
Q\}$, the \emph{right nucleus} $N_\rho(Q)=\{x\in Q;\; [y,z,x]=1$ for every $y$,
$z\in Q\}$, and the \emph{nucleus} $N(Q) = N_\lambda(Q)\cap N_\mu(Q)\cap
N_\rho(Q)$. All nuclei are associative but not necessarily normal subloops of
$Q$. The \emph{center} $Z(Q)$ of $Q$ consists of all elements $x\in N(Q)$ such
that $[x,y]=1$ for every $y\in Q$. It is then clear that $\varphi(Z(Q))=Z(Q)$
for every $\varphi\in\inn Q$, and hence $Z(Q)\unlhd Q$.

When $S\unlhd Q$, the \emph{factor loop} $Q/S$ is defined in the usual way.
Given $Q=Q_0$, let $Q_{i+1} = Q_i/Z(Q_i)$. If there is $m\ge 0$ such that $Q_m$
is trivial, we say that $Q$ is \emph{(centrally) nilpotent}, and if $m\ge 0$ is
the least integer for which $Q_m$ is trivial, we say that $Q$ is of
\emph{nilpotency class $m$}.

\section{Nuclear extensions}\label{Sc:Extensions}

Let $Q$, $K$, $F$ be loops. Then $Q$ is an \emph{extension} of $K$ by $F$ if
$K\unlhd Q$ and $Q/K\cong F$. Let us call an extension $Q$ of $K$ by $F$
\emph{nuclear} if $K$ is an abelian group such that $K\le N(Q)$.

In this subsection we generalize group extensions by abelian groups to nuclear
extensions of loops. We will need the following definitions:

A map $\theta:F\times F\to K$ is a \emph{cocycle} if
$\theta(x,1)=\theta(1,x)=1$ for every $x\in F$. Given a cocycle $\theta:F\times
F\to K$ and a homomorphism $\varphi:F\to\aut K$, $x\mapsto\varphi_x$, let
$K\ext{\theta}{\varphi}F$ be the groupoid $(K\times F,\circ)$ defined by
\begin{displaymath}
    (a,x)\circ(b,y) = (a\varphi_x(b)\theta(x,y),\,xy).
\end{displaymath}

Here is the key observation (we write $T_x$ for the inner mapping $T(x)$):

\begin{lemma}[Leong, Theorem 3 of \cite{Leong}]\label{Lm:Leong}
Let $Q$ be a loop with a normal subloop $K \leq N(Q)$. For each $x \in Q$,
define $\varphi_x = T_x |_K$. Then $\varphi_x \in \aut K$, and the mapping
$\varphi: Q\to \aut K$, $x\mapsto \varphi_x$ is a homomorphism.
\end{lemma}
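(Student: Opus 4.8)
The plan is to work from the implicit characterization of the middle inner mapping. Since $T_x = R_x^{-1}L_x$, for any $y \in Q$ the element $\varphi_x(y) = T_x(y)$ is the unique solution of
\[
    \varphi_x(y)\cdot x = x\cdot y .
\]
First I would dispose of bijectivity: because $K\unlhd Q$, the criterion quoted in the Background section gives $\psi(K)=K$ for every $\psi\in\inn Q$, and in particular $T_x(K)=K$, so $\varphi_x=T_x|_K$ is a bijection of $K$ onto itself. Moreover $K\le N(Q)$ is associative, hence a group, so to conclude $\varphi_x\in\aut K$ it remains only to check that $\varphi_x$ preserves products. The whole engine of the argument is that every associator in which at least one entry lies in $N(Q)$ vanishes; I will use this repeatedly to transport elements of $K$ across products.

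For the claim $\varphi_x\in\aut K$, fix $a,b\in K$ and put $u=\varphi_x(a)$, $v=\varphi_x(b)$, so that $u,v\in K$ and $ux=xa$, $vx=xb$. I would show $(uv)x = x(ab)$, which by the implicit characterization forces $uv=\varphi_x(ab)$, i.e. $\varphi_x(a)\varphi_x(b)=\varphi_x(ab)$. The computation is the bracket-shifting chain
\[
    (uv)x = u(vx) = u(xb) = (ux)b = (xa)b = x(ab),
\]
where the first equality uses $v\in N_\mu(Q)$, the third uses $u\in N_\lambda(Q)$, and the last uses $a\in N_\mu(Q)$, all licensed by $K\le N(Q)$. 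This finishes the first assertion.

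For the homomorphism property of $\varphi$ I would argue analogously. Fix $c\in K$, set $d=\varphi_y(c)=T_y(c)$ and $e=\varphi_x(d)=T_x(d)$; both lie in $K$, and $dy=yc$, $ex=xd$. The goal is $e=\varphi_{xy}(c)$, i.e. $e\cdot(xy)=(xy)\cdot c$, which I would verify by
\[
    e(xy) = (ex)y = (xd)y = x(dy) = x(yc) = (xy)c,
\]
using $e\in N_\lambda(Q)$, then $d\in N_\mu(Q)$, then $c\in N_\rho(Q)$. Thus $\varphi_{xy}(c)=\varphi_x(\varphi_y(c))$ for all $c\in K$, which is exactly $\varphi_{xy}=\varphi_x\circ\varphi_y$, the required homomorphism identity into the group $\aut K$.

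I do not expect a genuine obstacle here: once the implicit relation $\varphi_x(y)\cdot x=x\cdot y$ is in hand and one remembers that membership in the nucleus annihilates associators, both statements reduce to the two displayed chains of associativity shifts. The only point demanding care is the bookkeeping, namely invoking the correct nucleus ($N_\lambda$, $N_\mu$, or $N_\rho$) at each step according to the position of the $K$-element in the product; conceptually the single substantive move is the passage from the implicit definition of $T_x$ to the identities $ux=xa$ and $dy=yc$ that drive both computations.
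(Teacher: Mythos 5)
Your proof is correct and follows essentially the same route as the paper's: both arguments rest on the implicit relation $T_x(y)\cdot x = xy$ together with repeated associator-vanishing for elements of $K\le N(Q)$, and your two displayed chains are the paper's two computations run in the reverse direction (the paper starts from $T_x(ab)\cdot x$ and $T_{xy}(a)$ and unwinds, while you start from $\varphi_x(a)\varphi_x(b)$ and $\varphi_x\varphi_y(c)$ and build up). Your explicit treatment of bijectivity via normality of $K$ is slightly more detailed than the paper's, but the substance is identical.
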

\begin{proof}
First fix $a, b\in K$ and $x\in Q$. Since $K$ is normal in $Q$, we have
$\varphi_x(K)\le K\le N(Q)$. In particular, $T_x(ab)\cdot x = x\cdot ab =
xa\cdot b = (T_x(a)\cdot x)b = T_x(a)\cdot xb = T_x(a) (T_x(b)\cdot x) = T_x(a)
T_x(b) \cdot x$. Canceling $x$ on the right then shows that $\varphi_x$ is an
automorphism of $K$.

Now fix $a\in K$ and $x,y\in Q$. Let $z = T_{xy}(a)$. By the first part, $z\in
K$, and so $(zx)y = z(xy) = (xy)a = x(ya) = x (T_y(a)\cdot y) = x T_y(a)\cdot
y$. Upon canceling $y$ on the right, we get $zx = x T_y(a)$, i.e., $z = T_x T_y
(a)$. Hence $\varphi_{xy}=\varphi_x\varphi_y$, as claimed.
\end{proof}

\begin{theorem}[Nuclear extensions of loops]\label{Th:NuclearExtensions}
Let $K$ be an abelian group and $Q$,
$F$ loops. Then the following conditions are equivalent:
\begin{enumerate}
\item[(i)] $Q$ is an extension of $K$ by $F$ and $K\le N(Q)$,
\item[(ii)] $Q = K\ext{\theta}{\varphi}F$ for a cocycle $\theta:F\times F\to K$
and a homomorphism $\varphi:F\to\aut K$.
\end{enumerate}
\end{theorem}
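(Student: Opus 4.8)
The plan is to prove the equivalence by establishing both directions separately. The direction (ii)$\Rightarrow$(i) is the concrete verification: I would take $Q = K\ext{\theta}{\varphi}F$ and check directly that it is a loop with the required structural properties. First I would confirm the neutral element is $(1,1)$, using the cocycle conditions $\theta(x,1)=\theta(1,x)=1$ together with $\varphi_1 = \mathrm{id}$ (which follows from $\varphi$ being a homomorphism) to verify $(1,1)\circ(a,x) = (a,x) = (a,x)\circ(1,1)$. Next I would verify the unique solvability of $(a,x)\circ(u,v) = (b,y)$ and $(u,v)\circ(a,x) = (b,y)$; the second coordinate forces $v$ to be the unique solution of the corresponding equation in the loop $F$, and then the first coordinate becomes a single linear equation in $K$, which is uniquely solvable because $K$ is an abelian group and $\varphi_x$ is an automorphism. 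I would then identify $K$ with $\{(a,1) : a\in K\}$ and $F$ with the quotient under the projection $(a,x)\mapsto x$, checking that $K$ is normal, that $Q/K\cong F$, and that $K$ lies in $N(Q)$ by a direct associator computation showing $(a,1)$ associates with everything.

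For the direction (i)$\Rightarrow$(ii), the plan is to build the coordinatization from a transversal. Given an extension $Q$ of $K$ by $F$ with $K\le N(Q)$, I would fix a section $s:F\to Q$ of the projection $\pi:Q\to Q/K\cong F$ normalized so that $s(1)=1$. Every element of $Q$ then has a unique expression; because $K\le N(Q)$ is a normal abelian subgroup, I can write each $q\in Q$ uniquely as $a\cdot s(x)$ with $a\in K$ and $x = \pi(q)\in F$, and I would use this to define the bijection $Q\to K\times F$, $a\cdot s(x)\mapsto (a,x)$. I would then define $\varphi_x = T_x|_K$, which by Lemma \ref{Lm:Leong} is an automorphism of $K$ and yields a homomorphism $\varphi:F\to\aut K$ (after checking it factors through $F$, which holds because $K\le N(Q)$ forces $T_a|_K = \mathrm{id}$ for $a\in K$). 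The cocycle $\theta$ is defined by the relation $s(x)s(y) = \theta(x,y)\,s(xy)$, with $\theta(x,y)\in K$; normalization of $s$ gives $\theta(x,1)=\theta(1,x)=1$. The crux is then to transport the multiplication of $Q$ through the bijection and verify it matches the formula for $K\ext{\theta}{\varphi}F$; this requires repeatedly using that $K\le N(Q)$ to move elements of $K$ across products without associativity errors, and using $\varphi_x$ to commute $s(x)$ past elements of $K$.

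The main obstacle I anticipate is the bookkeeping in (i)$\Rightarrow$(ii), specifically justifying each associativity rearrangement. Unlike the group case, $Q$ is not associative in general, so I must be careful that every parenthesization change I perform is licensed by $K\le N(Q)$ (and by the abelianness of $K$), rather than by blanket associativity. Concretely, computing the product $(a\cdot s(x))(b\cdot s(y))$ and reducing it to $(a\,\varphi_x(b)\,\theta(x,y))\cdot s(xy)$ involves first using $b\in N(Q)$ to regroup, then using $T_x$ to rewrite $s(x)\cdot b$ as $\varphi_x(b)\cdot s(x)$, and finally applying the definition of $\theta$; the order of these steps matters, and I expect to verify each one lands inside a nuclear configuration. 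A secondary subtlety is confirming that the definition of $\theta$ via the section is well-defined and genuinely $K$-valued, which follows from $s(x)s(y)$ and $s(xy)$ having the same image under $\pi$. Once these rearrangements are handled, the remaining identities are routine and symmetric to the checks already performed in direction (ii)$\Rightarrow$(i).
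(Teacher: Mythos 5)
Your proposal is correct and follows essentially the same route as the paper: the forward direction via a normalized section $\ell$ with $\varphi_x = T_{\ell(x)}|_K$ (justified by Lemma \ref{Lm:Leong}) and $\theta$ defined by $\ell(x)\ell(y)=\theta(x,y)\ell(xy)$, and the converse by direct verification of the loop axioms, the nuclear containment, normality of $K$, and $Q/K\cong F$. One small correction: the fact that $T_a|_K=\mathrm{id}$ for $a\in K$ (needed so that $\varphi$ factors through $F$) follows from the commutativity of $K$, not from $K\le N(Q)$.
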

\begin{proof}
Let $Q$ be an extension of $K$ by $F$, $K\le N(Q)$. Let $\pi:Q\to Q/K = F$ be
the natural projection, and let $\ell:F\to Q$ be such that $\ell(1)=1$ and
$\pi(\ell(x)) = x$ for every $x\in F$. Define $\varphi:F\to\aut K$,
$x\mapsto\varphi_x$, by $\varphi_x = T_{\ell(x)}|_K$. By Lemma \ref{Lm:Leong},
$\varphi_x\in\aut K$ for every $x\in F$. We have $\pi(\ell(xy)) = xy =
\pi(\ell(x))\pi(\ell(y)) = \pi(\ell(x)\ell(y))$, and thus for every $x$, $y\in
F$ there is a unique $\theta(x,y)\in K$ such that $\ell(x)\ell(y) =
\theta(x,y)\ell(xy)$.

Since $\ell(1)=1$, we have $\theta(x,1)=\theta(1,y)=1$, and $\theta:F\times
F\to K$ is a cocycle. Now, $\varphi_x\varphi_y = T_{\ell(x)}|_K\cdot
T_{\ell(y)}|_K = T_{\ell(x)\ell(y)}|_K = T_{\theta(x,y)\ell(xy)}|_K =
T_{\theta(x,y)}|_K\cdot \varphi_{xy}$, by Lemma \ref{Lm:Leong}. Since $K$ is
commutative, $T_k|_K=1$ for every $k\in K$. Hence $\varphi_{xy} =
\varphi_x\varphi_y$, and $\varphi:F\to\aut K$ is a homomorphism.

By the definition of $\ell$, for every $u\in Q$ there are uniquely determined
$a\in K$, $x\in F$ such that $u=a\ell(x)$. Define $\psi:Q\to
K\ext{\theta}{\varphi}F = (K\times F,\circ)$ by $\psi(u) = (a,x)$. It is clear
that $\psi$ is a bijection. Let $v=b\ell(y)$, with $b\in K$, $y\in F$. Then, on
the one hand, $\psi(u)\circ\psi(v) = (a,x)\circ (b,y) =
(a\varphi_x(b)\theta(x,y),xy)$. On the other hand, since $K\le N(Q)$, $uv =
a\ell(x)\cdot b\ell(y) = a(\ell(x)\cdot b\ell(y)) = a(\ell(x)b\cdot \ell(y)) =
a(T_{\ell(x)}(b)\ell(x)\cdot \ell(y)) = a(\varphi_x(b)\ell(x)\cdot \ell(y)) =
a(\varphi_x(b)\cdot \ell(x)\ell(y)) = a\varphi_x(b)\cdot \ell(x)\ell(y) =
a\varphi_x(b)\cdot \theta(x,y)\ell(xy) = a\varphi_x(b)\theta(x,y)\cdot
\ell(xy)$, and, consequently, $\psi(uv) = (a\varphi_x(b)\theta(x,y),xy)$. Thus
$\psi$ is an isomorphism.

Conversely, assume that $\theta:F\times F\to K$ is a cocycle, $\varphi:F\to\aut
K$ is a homomorphism, and $Q=K\ext{\theta}{\varphi}F$. Then $(1,1)\circ (b,y) =
(\varphi_1(b)\theta(1,y),y) = (b,y)$, and, similarly, $(a,x)(1,1) = (a,x)$,
showing that $(1,1)$ is the neutral element of $Q$. Note that $(a,x)\circ
(b,y)=(c,z)$ holds if and only if $a\varphi_x(b)\theta(x,y) = c$ and $xy=z$.
Hence, if $(a,x)$, $(c,z)$ are given, there is a unique $(b,y)$ satisfying
$(a,x)\circ(b,y)=(c,z)$, namely: $y$ is the unique solution to $xy=z$, and $b =
\varphi_x^{-1}(a^{-1}c\theta(x,y)^{-1})$. Similarly, there is a unique solution
$(a,x)$ when $(b,y)$, $(c,z)$ are given. Altogether, $Q$ is a loop.

We now show in detail that $K=(K,1)$ is a subloop of $N(Q)$. First,
$(a,1)\circ((b,y)\circ(c,z)) =(a,1)\circ(b\varphi_y(c)\theta(y,z),yz) =
(ab\varphi_y(c)\theta(y,z),yz)$, and $((a,1)\circ(b,y))\circ(c,z) =
(ab,y)\circ(c,z) = (ab\varphi_y(c)\theta(y,z),yz)$. Second,
$(b,y)\circ((a,1)\circ(c,z)) = (b,y)\circ (ac,z) =
(b\varphi_y(ac)\theta(y,z),yz)$, $((b,y)\circ(a,1))\circ(c,z) =
(b\varphi_y(a),y)\circ (c,z) = (b\varphi_y(a)\varphi_y(c)\theta(y,z),yz)$. As
$\varphi_y$ is a homomorphism, the two expressions coincide. Finally,
$(b,y)\circ ((c,z)\circ(a,1)) = (b,y)\circ (c\varphi_z(a),z) =
(b\varphi_y(c\varphi_z(a))\theta(y,z),yz)$, and $((b,y)\circ(c,z))\circ(a,1) =
(b\varphi_y(c)\theta(y,z),yz)\circ(a,1) =
(b\varphi_y(c)\theta(y,z)\varphi_{yz}(a),yz)$. As $\varphi_y$  and $\varphi$
are homomorphisms, the two expressions coincide.

We proceed to show that $K\unlhd Q$. Since $K\le N(Q)$, we get for free that
$K$ is closed under all left and right inner mappings of $Q$. It suffices to
show that $T_{(a,x)}(K)\subseteq K$ for every $a\in K$, $x\in F$. Now,
$T_{(a,x)}(b,1)$ belongs to $K$ if and only if there is $c\in K$ such that
$(a,x)\circ (b,1) = (c,1)\circ (a,x)$. Since $(a,x)\circ (b,1) =
(a\varphi_x(b),x)$ and $(c,1)\circ (a,x) = (ca,x)$, it suffices to take
$c=\varphi_x(b)$.

Finally, we must establish $Q/K\cong F$. But this is clear, since
$(a,x)\circ(b,y) = (a\varphi_x(b)\theta(x,y),xy)$ and $(1,x)\circ(1,y)$,
$(1,xy)$ coincide modulo $K$.
\end{proof}

\subsection{The first example}\label{Ss:FirstExample}

The loop $C$ from the Introduction has a normal nucleus isomorphic to the
elementary abelian group of order $16$ and such that $C/N(C)$ is an elementary
abelian group of order $8$. It is therefore a nuclear extension of $N(C)$ by
$C/N(C)$.

Note that once it is known that a loop $Q$ is a nuclear extension of $K\le
N(Q)$ by $Q/K$, the proof of Theorem \ref{Th:NuclearExtensions} is constructive
and provides the action $\varphi$ and the cocycle $\theta$, as soon as the
section mapping $\ell$ is chosen.

Hence, starting with a multiplication table for $C$ obtained from the original
construction of Cs\"{o}rg\H{o}, we can easily (with a computer) reconstruct $C$
as follows:

Let $K=\langle a_1, a_2, a_3, a_4\rangle$ be an elementary abelian group of
order $16$, and $F=\langle x_1, x_2, x_3\rangle$ an elementary abelian group of
order $8$. Set $a=a_1a_2a_3$ and $b=a_4$. Define a homomorphism
$\varphi:F\to\aut K$ by
\begin{displaymath}
    x_i\mapsto (a\leftrightarrow b,\, a_{i+1}\mapsto a_{i+1},\,a_{i+2}\mapsto a_{i+2}),
\end{displaymath}
where the addition in the subscripts is modulo $\{1,2,3\}$. Define a cocycle
$\theta: F\times F\to K$ by
\begin{displaymath}
\begin{array}{c|cccccccc}
                &1  &x_1        &x_2    &x_1x_2     &x_3    &x_1x_3     &x_2x_3     &x_1x_2x_3\\
    \hline
    1           &1  &1          &1      &1          &1      &1          &1          &1\\
    x_1         &1  &1          &1      &1          &a_2    &a_2        &aba_2      &aba_2\\
    x_2         &1  &a_3        &1      &a_3        &a_1    &aa_2       &a_1        &aa_2\\
    x_1x_2      &1  &a_3        &1      &a_3        &aa_3   &a          &a_3b       &b\\
    x_3         &1  &1          &1      &1          &1      &1          &1          &1\\
    x_1x_3      &1  &1          &1      &1          &a_2    &a_2        &aba_2      &aba_2\\
    x_2x_3      &1  &aba_3      &1      &aba_3      &a_1    &a_2b       &a_1        &a_2b\\
    x_1x_2x_3   &1  &aba_3      &1      &aba_3      &aa_3   &b          &a_3b       &a
\end{array}
\end{displaymath}
The resulting loop $K\ext{\theta}{\varphi}F$ is isomorphic to $C$.

Here are some properties of $C$: $N(C)=N_\rho(C)$ is elementary abelian of
order $16$, $|N_\lambda(C)|=|N_\mu(C)|=32$, $Z(C)=A(C)$, $|Z(C)|=2$. One can
interpret the fact that $|A(C)|=2$ as an indication that $C$ is very close to a
group, indeed.

\section{Extensions by crosshomomorphisms}\label{Sc:Crosshomomorphisms}

It is not clear how to deduce a general construction from a specific nuclear
extension, such as that of Subsection \ref{Ss:FirstExample}.

As far as nuclei are concerned, a more symmetric loop $\overline{C}$ is
obtained from $C$ by a simple greedy algorithm. We were able to develop the
general theory of Section \ref{Sc:Modifications} only after we understood the
loop $\overline{C}$, and we therefore devote considerable attention to it here.

Given a groupoid $Q$, let $\mu(Q) = |\{(a,b,c)\in Q\times Q\times Q;\;a(bc)\ne
(ab)c\}|$. Hence $\mu(Q)$ is a crude measure of (non)associativity of $Q$.

Let $T$ be a multiplication table of $C$ split into blocks of size $16\times
16$ according to the cosets modulo $N(C)$. Let $h$ be the unique nontrivial
central element of $C$.

(*) For $1<i < j\le 8$, let $T_{ij}$ be obtained from $T$ by multiplying the
$(i,j)$th block and the $(j,i)$th block of $T$ by $h$ on the right. Let $(s,t)$
be such that $\mu(T_{st})$ is minimal among all $\mu(T_{ij})$. If $\mu(T_{st})
\ge \mu(T)$, stop and return $T$. If $\mu(T_{st})<\mu(T)$, replace $T$ by
$T_{st}$, and repeat (*).

It turns out that the multiplication table $T$ found by the above greedy
algorithm yields another loop $\overline{C}$ of nilpotency class $3$ whose
inner mapping group is abelian. In addition, the following properties hold for
$\overline{C}$: $N(\overline{C})$ is elementary abelian of order $16$,
$|N_\lambda(\overline{C})| = |N_\mu(\overline{C})| = |N_\rho(\overline{C})| =
64$, $Z(\overline{C})=A(\overline{C})$, $|Z(\overline{C})|=2$. In particular,
$\overline{C}$ is not isomorphic to $C$.

We are now going to construct $\overline{C}$ anew. First we construct a certain
group $\overline G$, using a cocycle $\theta$ based on a crosshomomorphism. The
loop $\overline{C}$ can then be obtained in two ways: upon using a slight
variation of $\theta$, or, equivalently, by replacing $xy$ in $\overline{G}$
with $xyh$ for certain pairs $(x,y)\in \overline{G}\times \overline{G}$, where
$h$ is a nontrivial central element of $\overline{G}$.

\subsection{Crosshomomorphisms}\label{Ss:Crosshomomorphisms}

Recall that if $(A,\cdot)$, $(B,+)$ are groups and $\varphi:A\to\aut B$ is an
action, then a mapping $\gamma:A\to B$ is a \emph{crosshomomorphism} if
$\gamma(xy) = \gamma(x) + \varphi_x\gamma(y)$.

Let $F_1$, $F_2$ be multiplicative groups, and $K$ an additive abelian group.
Let $\psi:F_1\to(\endom K, +)$, $x\mapsto \psi_x$, and $\varphi:F_2\to(\aut
K,\circ)$, $y\mapsto\varphi_y$ be homomorphisms. Assume further that $\psi$ and
$\varphi$ commute, i.e., $\psi_x\varphi_y = \varphi_y\psi_x$. Extend the action
$\varphi$ to $F=F_1\times F_2$ by $\varphi_{(x_1,x_2)} = \varphi_{x_2}$.

Let $\gamma:F_2\to K$ be a map satisfying $\gamma(1)=0$, and let
$\theta:F\times F\to K$ be defined by
\begin{equation}\label{Eq:FromCross}
    \theta((x_1,x_2),(y_1,y_2)) = \psi_{y_1}\gamma(x_2).
\end{equation}

\begin{lemma}\label{Lm:CrossGroup}
Let $F=F_1\times F_2$, $K$, $\psi$, $\varphi$, $\gamma$ and $\theta$ be as
above. Then $K\ext{\theta}{\varphi}F$ is a group if and only if $\gamma$ is a
crosshomomorphism.
\end{lemma}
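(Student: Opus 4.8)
The plan is to apply the associativity criterion directly to the operation $\circ$ on $K\ext{\theta}{\varphi}F$, and show that the resulting condition on $\theta$ is equivalent to the crosshomomorphism identity for $\gamma$. Since $K$ is abelian, $\varphi$ is an action into $\aut K$, and the underlying product $F=F_1\times F_2$ is a group, the groupoid $K\ext{\theta}{\varphi}F$ is a loop by Theorem \ref{Th:NuclearExtensions}, and it is a group precisely when multiplication is associative. So the entire statement reduces to comparing $((a,x)\circ(b,y))\circ(c,z)$ with $(a,x)\circ((b,y)\circ(c,z))$ for general $x=(x_1,x_2)$, $y=(y_1,y_2)$, $z=(z_1,z_2)$.

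First I would expand both triple products. Writing out the definition, the $F$-component is $xyz$ in both cases, so associativity is controlled entirely by the $K$-component. Since $\varphi_x$ is an automorphism of the abelian group $K$ and $K$ is written additively, the two $K$-components differ by the familiar $2$-cocycle coboundary condition: associativity holds for all arguments if and only if
\begin{displaymath}
    \theta(x,y) + \theta(xy,z) = \varphi_x(\theta(y,z)) + \theta(x,yz)
\end{displaymath}
for all $x,y,z\in F$. This is just the standard cocycle identity for extensions of abelian groups, and I would either cite it or rederive it in one line from the expanded products; the only care needed is to keep track of the automorphism $\varphi_x$ acting on the inner cocycle value.

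Next I would substitute the specific $\theta$ from \eqref{Eq:FromCross}. Because $\theta((x_1,x_2),(y_1,y_2)) = \psi_{y_1}\gamma(x_2)$ depends on the second component of its first argument and the first component of its second argument, and because $\varphi_{(x_1,x_2)}=\varphi_{x_2}$, the four terms of the cocycle identity become $\psi_{y_1}\gamma(x_2)$, $\psi_{z_1}\gamma(x_2y_2)$, $\varphi_{x_2}\psi_{z_1}\gamma(y_2)$, and $\psi_{y_1z_1}\gamma(x_2)$. Here I would use that $\psi$ is a homomorphism into $(\endom K,+)$, so $\psi_{y_1z_1}=\psi_{y_1}+\psi_{z_1}$, and that $\psi$ and $\varphi$ commute, so $\varphi_{x_2}\psi_{z_1}=\psi_{z_1}\varphi_{x_2}$. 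After these substitutions the terms $\psi_{y_1}\gamma(x_2)$ on the left and the split-off summand $\psi_{y_1}\gamma(x_2)$ on the right cancel, and what remains simplifies to $\psi_{z_1}\gamma(x_2y_2) = \psi_{z_1}\gamma(x_2) + \psi_{z_1}\varphi_{x_2}\gamma(y_2)$, i.e. $\psi_{z_1}\bigl(\gamma(x_2y_2) - \gamma(x_2) - \varphi_{x_2}\gamma(y_2)\bigr)=0$.

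The main obstacle, and the only genuinely nonroutine point, is extracting the biconditional from this last equation. The forward direction is immediate: if $\gamma$ is a crosshomomorphism then $\gamma(x_2y_2)-\gamma(x_2)-\varphi_{x_2}\gamma(y_2)=0$, so the cocycle identity holds and $K\ext{\theta}{\varphi}F$ is a group. For the converse I would specialize $z_1$ so that $\psi_{z_1}$ is forced to detect the inner expression; the cleanest choice is $z_1=1$, but $\psi_1$ is the zero endomorphism (being the identity of $(\endom K,+)$), which kills the equation, so that does not work. Instead I expect the intended reading is that associativity for \emph{all} choices, together with a faithfulness or nondegeneracy hypothesis on $\psi$ implicit in the setup, forces $\gamma(x_2y_2)=\gamma(x_2)+\varphi_{x_2}\gamma(y_2)$; I would state precisely which minimal assumption on $\psi$ (e.g. that $\bigcap_{w\in F_1}\ker\psi_w=0$, which holds in the concrete application) is used, verify it is available from the hypotheses, and thereby conclude that $\gamma$ is a crosshomomorphism. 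This is the step I would write out most carefully, since the equivalence can fail without such nondegeneracy.
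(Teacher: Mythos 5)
Your computation is the paper's computation almost verbatim: both reduce the group property to the cocycle identity $\theta(x,y)+\theta(xy,z)=\varphi_x\theta(y,z)+\theta(x,yz)$, substitute \eqref{Eq:FromCross}, use additivity of $\psi$ and the commutation $\psi\varphi=\varphi\psi$, and arrive at exactly the paper's condition \eqref{Eq:Coincide}, namely $\psi_{z_1}\bigl(\varphi_{x_2}\gamma(y_2)+\gamma(x_2)\bigr)=\psi_{z_1}\gamma(x_2y_2)$. The forward direction is then identical in both treatments. Where you diverge is the converse, and your hesitation there is well founded: the paper disposes of it by setting $z_1=1$ and invoking ``the fact that $\psi_1$ is the identity on $K$,'' but in the general setup of Subsection \ref{Ss:Crosshomomorphisms}, where $F_1$ is a multiplicative group and $\psi:F_1\to(\endom K,+)$ is a homomorphism, the identity of $F_1$ must map to the identity of $(\endom K,+)$, i.e.\ $\psi_1$ is the \emph{zero} endomorphism, exactly as you observe. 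The paper's phrase only makes sense in the concrete application, where $F_1=\{0,1\}$ is written additively and $1$ is the nonidentity element, for which $\psi_1=\mathrm{id}_K$. Read at the stated level of generality the ``only if'' direction is in fact false (take $\psi\equiv 0$: then $\theta\equiv 0$ and one gets a group for any $\gamma$ with $\gamma(1)=0$), so some nondegeneracy hypothesis such as your $\bigcap_{w\in F_1}\ker\psi_w=0$ (or the existence of a single $w$ with $\psi_w$ injective, which is what the paper implicitly uses) must be added to the statement. Your proof is therefore correct and, in the converse direction, more careful than the paper's; just make sure the extra hypothesis is stated explicitly rather than described as ``implicit,'' and note that it does hold in the application since $\psi_1=\mathrm{id}_K$ there.
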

\begin{proof}
Direct computation shows that $K\ext{\theta}{\varphi} F$ is a group if and only
if $\theta(x,y) + \theta(xy,z) = \varphi_x\theta(y,z) + \theta(x,yz)$. Now,
\begin{align*}
    \theta((x_1,x_2),(y_1,y_2)) + \theta((x_1y_1,x_2y_2),(z_1,z_2))
    = \psi_{y_1}\gamma(x_2) + \psi_{z_1}\gamma(x_2y_2),
\end{align*}
while
\begin{align*}
    &\varphi_{(x_1,x_2)}\theta((y_1,y_2),(z_1,z_2)) + \theta((x_1,x_2),(y_1z_1,y_2z_2))
    = \varphi_{x_2}\psi_{z_1}\gamma(y_2) + \psi_{y_1z_1}\gamma(x_2)\\
    &= \psi_{z_1}\varphi_{x_2}\gamma(y_2) + \psi_{y_1}\gamma(x_2) +
        \psi_{z_1}\gamma(x_2)
    = \psi_{z_1}(\varphi_{x_2}\gamma(y_2) + \gamma(x_2)) +
        \psi_{y_1}\gamma(x_2).
\end{align*}
The two expressions coincide if and only if
\begin{equation}\label{Eq:Coincide}
    \psi_{z_1}(\varphi_{x_2}\gamma(y_2) + \gamma(x_2)) = \psi_{z_1}\gamma(x_2y_2).
\end{equation}
If $\gamma$ is a crosshomomorphism then $\gamma(x_2y_2) = \gamma(x_2)+
\varphi_{x_2}\gamma(y_2)$, and \eqref{Eq:Coincide} holds. Conversely, if
\eqref{Eq:Coincide} holds, use $z_1=1$ and the fact that $\psi_1$ is the
identity on $K$ to conclude that $\gamma$ is a crosshomomorphism.
\end{proof}

\begin{lemma}\label{Lm:V4} Let $K = \{(a_0,a_1,a_2);\;0\le a_i\le 1\}$
be a three-dimensional vector space over the two-element field. Let $V_4 =
\{b_1^{c_1}b_2^{c_2};\;0\le c_i\le 1\}$ be the Klein group. Then
$\varphi:V_4\to \aut K$ defined by
\begin{displaymath}
    \varphi_{b_1^{c_1}b_2^{c_2}}(a_0,a_1,a_2) = (a_0+c_2a_1+c_1a_2,a_1,a_2)
\end{displaymath}
is a homomorphism, and $\gamma:V_4\to K$ defined by
\begin{displaymath}
    \gamma(b_1^{c_1}b_2^{c_2}) = (c_1+c_2+c_1c_2,c_1,c_2)
\end{displaymath}
is a crosshomomorphism.
\end{lemma}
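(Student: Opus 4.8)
The plan is to verify both assertions by direct computation over the two-element field $\mathbb{F}_2$ underlying $K$. Write a generic element of $V_4$ as $x = b_1^{c_1}b_2^{c_2}$ and a second as $y = b_1^{d_1}b_2^{d_2}$, with $c_i,d_i\in\{0,1\}$; since $V_4$ is the Klein group, the product is $xy = b_1^{c_1+d_1}b_2^{c_2+d_2}$, where the exponents are read modulo $2$. The only subtlety throughout is to keep track of this reduction, but because all arithmetic in $K$ is already carried out in $\mathbb{F}_2$, reducing the exponents mod $2$ and computing in $\mathbb{F}_2$ are compatible, so I may treat $c_i + d_i$ as an element of $\mathbb{F}_2$ without further comment.

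For the first claim, note that for each fixed $g = b_1^{c_1}b_2^{c_2}$ the map $\varphi_g$ is $\mathbb{F}_2$-linear in $(a_0,a_1,a_2)$, since the defining formula is linear, and it is an involution: applying it twice adds $c_2 a_1 + c_1 a_2$ to the first coordinate a second time, which cancels in characteristic two, so $\varphi_g^2 = \mathrm{id}$ and hence $\varphi_g\in\aut K$. To see that $\varphi$ is a homomorphism I would compose the two maps: $\varphi_y$ sends $(a_0,a_1,a_2)$ to $(a_0 + d_2 a_1 + d_1 a_2, a_1, a_2)$, and then $\varphi_x$ adds $c_2 a_1 + c_1 a_2$ to the first coordinate, giving $(a_0 + (c_2+d_2)a_1 + (c_1+d_1)a_2, a_1, a_2)$. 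This agrees with $\varphi_{xy}$, whose first-coordinate coefficients of $a_1$ and $a_2$ are precisely the reduced exponents $c_2 + d_2$ and $c_1 + d_1$. Thus $\varphi_{xy} = \varphi_x\varphi_y$.

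For the second claim I would expand both sides of the crosshomomorphism identity $\gamma(xy) = \gamma(x) + \varphi_x\gamma(y)$. Starting from $\gamma(y) = (d_1 + d_2 + d_1 d_2, d_1, d_2)$, applying $\varphi_x$ produces first coordinate $d_1 + d_2 + d_1 d_2 + c_2 d_1 + c_1 d_2$ with last two coordinates $d_1,d_2$; adding $\gamma(x) = (c_1 + c_2 + c_1 c_2, c_1, c_2)$ yields last two coordinates $c_1 + d_1$ and $c_2 + d_2$, which already match those of $\gamma(xy)$. The content of the lemma therefore lies entirely in the first coordinate, where, writing $e_i = c_i + d_i$, one must check
\begin{displaymath}
    e_1 + e_2 + e_1 e_2 = (c_1 + c_2 + c_1 c_2) + (d_1 + d_2 + d_1 d_2) + (c_2 d_1 + c_1 d_2)
\end{displaymath}
in $\mathbb{F}_2$. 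This is the step that requires care and is the main (if modest) obstacle: the two cross terms $c_2 d_1 + c_1 d_2$ contributed by $\varphi_x$ are exactly what is needed to reconcile the quadratic parts, since expanding $e_1 e_2 = (c_1 + d_1)(c_2 + d_2) = c_1 c_2 + d_1 d_2 + c_1 d_2 + c_2 d_1$ shows that both sides reduce to $c_1 + c_2 + d_1 + d_2 + c_1 c_2 + d_1 d_2 + c_1 d_2 + c_2 d_1$. Hence $\gamma$ is a crosshomomorphism, completing the proof.
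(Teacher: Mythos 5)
Your proof is correct and follows essentially the same route as the paper's: a direct computation verifying that $\varphi$ is a homomorphism by composing the maps, and that $\gamma$ satisfies the crosshomomorphism identity by expanding both sides over $\mathbb{F}_2$ and matching the quadratic terms. You supply slightly more detail than the paper (which leaves the homomorphism check to the reader and stops after displaying the two expansions), but the substance is identical.
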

\begin{proof}
It is easy to see that $V_4$ acts on $K$ via $\varphi$. It remains to check
that $\gamma(xy) = \gamma(x)+\varphi_x\gamma(y)$ for every $x$, $y\in V_4$. We
have
\begin{displaymath}
    \gamma(b_1^{c_1}b_2^{c_2}\cdot b_1^{d_1}b_2^{d_2}) =
    \gamma(b_1^{c_1+d_1}b_2^{c_2+d_2}) = (c_1+d_1+c_2+d_2+(c_1+d_1)(c_2+d_2),
    c_1+d_1, c_2+d_2),
\end{displaymath}
while
\begin{align*}
    &\gamma(b_1^{c_1}b_2^{c_2})
        + \varphi_{b_1^{c_1}b_2^{c_2}}\gamma(b_1^{d_1}b_2^{d_2})\\
    & = (c_1+c_2+c_1c_2,c_1,c_2) +
        \varphi_{b_1^{c_1}b_2^{c_2}}(d_1 + d_2 + d_1d_2, d_1, d_2)\\
    & = (c_1 + c_2 + c_1c_2, c_1, c_2)
        + (d_1+d_2+d_1d_2 + c_2d_1 +c_1d_2, d_1,d_2)\\
    & = (c_1+c_2+c_1c_2+d_1+d_2 + d_1d_2 + c_2d_1 + c_1d_2, c_1+d_1, c_2+d_2).
\end{align*}
\end{proof}

Let $F_2=\langle \rho$, $\sigma;\;\rho^4=\sigma^2=(\sigma\rho)^2=1\rangle$ be
the dihedral group of order $8$. With $f_1=\sigma$ and $f_2=\sigma\rho$, we
have $F_2=\langle f_1,f_2\rangle$, and every element of $F_2$ can be written
uniquely as $(f_1f_2)^{2_i}f_1^jf_2^k$, where $0\le i$, $j$, $k\le 1$. We have
$Z(F_2)=\{1,f_1f_2\}$, and the projection $\pi:F_2\to F_2/Z(F_2)\cong
V_4=\langle b_1,b_2\rangle$ is determined by $f_i\mapsto b_i$. Hence the action
$\varphi$ of $V_4$ on $K$ from Lemma \ref{Lm:V4} can be extended to an action
$\varphi$ of $F_2$ on $K$ via $\varphi_x = \varphi_{\pi(x)}$, and the
crosshomomorphism $\gamma:V_4\to K$ from Lemma \ref{Lm:V4} can be extended into
a crosshomomorphism $\gamma:F_2\to K$ by $\gamma(x) = \gamma(\pi(x))$.

Let $F_1=\{0,1\}$ be the two element field, and let $\psi:F_1\to \endom K$ be
the scalar multiplication. Extend $\varphi$ once again into an action of
$F=F_1\times F_2$ on $K$ by $\varphi_{(x_1,x_2)} = \varphi_{x_2}$. Then
$\psi_x\varphi_y = \varphi_y\psi_x$ (since $\psi_x$ is either the zero map or
the identity on $K$). Let us calculate the explicit formula for the cocycle
$\theta$ associated with $\gamma$ via \eqref{Eq:FromCross}:
\begin{multline*}
    \theta((\ell,(f_1f_2)^{2i}f_1^jf_2^k),\,(\ell',(f_1f_2)^{2i'}f_1^{j'}f_2^{k'}))
    = \psi_{\ell'}\gamma((f_1f_2)^{2i}f_1^{j}f_2^{k})
    = \psi_{\ell'}\gamma(b_1^{j}b_2^{k})\\
    = \psi_{\ell'}(j+k+jk,j,k)
    = (\ell'(j+k+jk), \ell' j, \ell' k).
\end{multline*}
By Lemma \ref{Lm:CrossGroup}, $\overline{G} = K\ext{\theta}{\varphi}F$ is a
group (of nilpotency class three).

\subsection{The loop $\overline{C}$}

Upon modifying the cocycle $\theta$ slightly, we obtain a copy of
$\overline{C}$ and other loops of nilpotency class three with commuting inner
mappings.

For instance, define $\theta':F\times F\to K$ by
\begin{displaymath}
    \theta'((\ell,(f_1f_2)^{2i}f_1^jf_2^k),\,(\ell',(f_1f_2)^{2i'}f_1^{j'}f_2^{k'}))
    = (\ell'(i+j+k+jk), \ell' j, \ell' k).
\end{displaymath}
Then $K\ext{\theta'}{\varphi}F$ is isomorphic to $\overline{C}$.

\begin{figure}
\begin{displaymath}
\begin{array}{c||c|c|c|c}
    F&(\ell',i')=(0,0)&(\ell',i')=(0,1)&(\ell',i')=(1,0)&(\ell',i')=(1,1)\\
    \hline\hline
    (\ell,i)=(0,0)&1&1&1&1\\
    \hline
    (\ell,i)=(0,1)&1&1&h&h\\
    \hline
    (\ell,i)=(1,0)&1&1&1&1\\
    \hline
    (\ell,i)=(1,1)&1&1&h&h
\end{array}
\end{displaymath}
\caption{Modifying the group $\overline{G}$ by $h=(1,0,0)\in K$ to obtain
$\overline{C}$}\label{Fg:Modification}
\end{figure}

Let $h = (1,0,0)\in K$. Upon comparing the cocycles $\theta$ and $\theta'$, it
is now easy to describe $\overline{C}$ as a modification of $\overline{G}$,
where the product $xy$ is replaced by $xyh$ on certain blocks modulo $K$, as
indicated in Figure \ref{Fg:Modification}. In the figure, elements of
$F=\overline{G}/K$ are labeled as above, and each cell represents a $4\times 4$
block in the multiplication table of $F$.

To better understand the relationship between $\overline{G}$ and $\overline{C}$
we have considered additional variations of $\theta$. For $t_i\in \{0,1\}$,
$0\le i\le 6$, and $t = \sum_{i=0}^6 t_i2^i$, let
\begin{multline*}
    \theta_t((\ell,(f_1f_2)^{2i}f_1^jf_2^k),\,(\ell',(f_1f_2)^{2i'}f_1^{j'}f_2^{k'}))\\
    = (\ell'( t_0i + t_1j + t_2ij + t_3k + t_4ik + t_5jk + t_6ijk),\, \ell' j,\, \ell' k),
\end{multline*}
Then $K\ext{\theta_t}{\varphi}F$ is a group if and only if $t\in \{32, 34, 40,
42\}$, and all groups obtained in this way are isomorphic to $\overline{G}$.
More importantly, $K\ext{\theta_t}{\varphi}F$ is a nonassociative loop of
nilpotency class three with commuting inner mappings if and only if $t\in \{1$,
$3$, $9$, $11$, $33$, $35$, $41$, $43\}$, and all these loops are isomorphic to
the loop $\overline{C}$.

\begin{figure}
\begin{displaymath}
\begin{array}{|l|}
\hline
\text{$\mathbb F_2=\{0,1\}$ $\dots$ two-element field}\\
\text{$K = (\mathbb F_2)^3$ $\dots$ normal subgroup of $Q$}\\
\text{$D_8 = \langle \sigma,\rho; \sigma^2 = \rho^4 = (\sigma\rho)^2=1\rangle$ $\dots$ dihedral group of order $8$}\\
\text{$F = \mathbb F_2\times D_8$ $\dots$ factor group $Q/K$}\\
\text{$\varphi:F\to\aut K$ $\dots$ action}\\
\text{\quad\quad\quad\quad$\varphi_{(\ell,\,\rho^{2i}\sigma^j(\sigma\rho)^k)}(a,b,c)
    =(a+kb+jc,\,b,\,c)$}\\
\text{$\theta:F\times F\to K$ $\dots$ cocycle}\\
\text{\quad\quad\quad\quad$\theta((\ell,\,\rho^{2i}\sigma^j(\sigma\rho)^k),\,
        (\ell',\,\rho^{2i'}\sigma^{j'}(\sigma\rho)^{k'})) = (\ell'i,\,\ell'j,\,\ell'k)$}\\
\text{$(Q,\circ) = K\ext{\theta}{\varphi}F$ $\dots$ loop of nilpotency class three with abelian $\inn Q$}\\
\text{\quad\quad\quad\quad$(a,x)\circ(b,y) = (a+\varphi_x(b) + \theta(x,y),\,xy)$}\\
\hline
\end{array}
\end{displaymath}
\caption{Construction of the loop $Q=\overline{C}$ as an extension of $K$ by
$F$}\label{Fg:overlineC}.
\end{figure}

Since the cocycle $\theta_1$ is especially easy to describe, we present the
construction of $\overline{C}$ from scratch in Figure \ref{Fg:overlineC}. This
is presently the shortest description of a loop of nilpotency class three with
an abelian group of inner mappings.

Finally, the cocycle
\begin{displaymath}
    \theta''((\ell,(f_1f_2)^{2i}f_1^jf_2^k),\,(\ell',(f_1f_2)^{2i'}f_1^{j'}f_2^{k'}))
    = (\ell'(i+(k-k')j), \ell' j, \ell' k)
\end{displaymath}
produces a loop of nilpotency class three with commuting inner mappings that is
isomorphic neither to $C$ nor to $\overline{C}$. Since we will eventually be
able to construct many such examples, we do not pursue extensions any further.

\subsection{A power-associative loop that is the union of its nuclei}

Allow us to digress in this subsection.

A loop is \emph{power-associative} if each of its elements generates a
subgroup. The loop $\overline{C}$ contains a nonassociative power-associative
subloop that is a union of its nuclei. Since we are not aware of such a loop
appearing in the literature, we construct it here:

Let $K=\langle a_1, a_2, a_3, a_4\rangle$ be an elementary abelian group of
order $16$, and $F=\langle x_1, x_2\rangle$ an elementary abelian group of
order $4$. As above, let $a=a_1a_2a_3$ and $b=a_4$. Define a homomorphism
$\varphi:F\to\aut K$ by
\begin{displaymath}
    x_i \mapsto (a\mapsto a,\,b\mapsto b,\,a_i\mapsto a_i,\,a_3\mapsto aba_3),
\end{displaymath}
and a cocycle $\theta: F\times F\to K$ by
\begin{displaymath}
\begin{array}{c|cccc}
                &1  &x_1        &x_2        &x_1x_2\\
    \hline
    1           &1  &1          &1          &1\\
    x_1         &1  &a_1        &a_2b       &a_3\\
    x_2         &1  &aba_2      &a_2        &1\\
    x_1x_2      &1  &aa_3       &a          &a_3.
\end{array}
\end{displaymath}
Then $Q = K\ext{\theta}{\varphi}F$ is a nonassociative power-associative loop
of order $64$ such that $N_\lambda(Q)\cup N_\mu(Q)\cup N_\rho(Q) = Q$,
$|N(Q)|=16$, $|N_\lambda(Q)|=|N_\rho(Q)|=|N_\mu(Q)|=32$.

The subloop $Q\le\overline{C}$ was first spotted by Michael K.~Kinyon.

\section{Group modifications}\label{Sc:Modifications}

The properties of all examples constructed so far were verified by direct
machine computation. To remedy the situation, we now develop a theory based on
group modifications that also yields loops of nilpotency class three with
commuting inner mappings, but which does not require any machine computation.

\subsection{Conditions that make $\inn Q$ abelian}\label{Ss:Conditions}

Our point of departure is based on the structural properties of the loops $C$
and $\overline{C}$.

For the rest of this section, let $G$ be a group, $Z\le K\le N\unlhd G$, where
$N$ is abelian, $G/N$ is abelian, $Z\le Z(G)$, $K\unlhd G$, and $N/K\le
Z(G/K)$. Furthermore, let $\mu:G/K\times G/K\to Z$ be a mapping satisfying
$\mu(xK,K)=1=\mu(K,xK)$ for every $x\in G$.

Write $\mu(x,y)$ instead of $\mu(xK,yK)$, and define a groupoid $Q=(G,*)$ by
\begin{equation}\label{Eq:Q}
    x*y = xy\mu(x,y).
\end{equation}

\begin{lemma}\label{Lm:IsLoop} $Q$ is a loop.
\end{lemma}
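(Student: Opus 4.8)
The plan is to verify the two defining properties of a loop directly: that $Q=(G,*)$ has a neutral element, and that every left and right translation of $(G,*)$ is a bijection of $G$. Only part of the standing hypotheses is needed for this, namely that $\mu$ takes values in $Z\le K$ with $Z\le Z(G)$, that $\mu(x,y)$ depends on $x$, $y$ only through the cosets $xK$, $yK$, and the normalization $\mu(xK,K)=\mu(K,xK)=1$; the remaining assumptions ($N$ abelian, $G/N$ abelian, $N/K\le Z(G/K)$) are reserved for the later results on nilpotency and commuting inner mappings. First I would check that the group identity $1$ is the neutral element of $Q$: using the normalization of $\mu$ we get $1*x=1\cdot x\cdot\mu(K,xK)=x$ and $x*1=x\cdot 1\cdot\mu(xK,K)=x$ for every $x\in G$.

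The heart of the matter is to show that each left translation $L^*_a\colon x\mapsto a*x$ and each right translation $R^*_a\colon x\mapsto x*a$ is a permutation of $G$, which is exactly the unique solvability of $a*x=b$ and $y*a=b$. The key observation is that $\mu$ is constant on each coset of $K$. Fix $a\in G$ and a coset $C=xK$. For all $x\in C$ the value $z=\mu(aK,xK)\in Z$ is the same, so on $C$ we have $a*x=axz$. Since $z\in Z\le Z(G)\cap K$, right multiplication by $z$ is a bijection of $G$ that fixes every $K$-coset setwise. Hence $L^*_a$ restricted to $C$ is the group left translation by $a$ (a bijection) followed by right multiplication by the fixed central element $z$, and it therefore maps $C$ bijectively onto the coset $aC=(ax)K$.

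Because $C\mapsto aC$ is a bijection of $G/K$ (as $G/K$ is a group), the map $L^*_a$ permutes the $K$-cosets and restricts to a bijection on each of them; gluing these pieces together shows that $L^*_a$ is a bijection of $G$, so $a*x=b$ has a unique solution. The argument for $R^*_a$ is entirely analogous: for a fixed coset $xK$ the value $\mu(xK,aK)$ is constant, right cosets coincide with left cosets because $K\unlhd G$, and the correction term is again central, so $R^*_a$ permutes the $K$-cosets and is bijective on each, giving a unique solution to $y*a=b$.

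The only genuine subtlety — and the point I expect to require care — is that $\mu(a,x)$ really does depend on $x$, so $L^*_a$ is \emph{not} a single group translation and cannot be handled in one stroke; one must argue coset by coset and then reassemble. The two containments $Z\le Z(G)$ and $Z\le K$ are precisely what make this reassembly valid: centrality guarantees that right multiplication by the per-coset correction term is a bijection of $G$, while $Z\le K$ guarantees that this term does not move elements out of the target block $aC$. Once these two facts are isolated, the coset-wise bijections glue to global bijections and $Q$ is a loop.
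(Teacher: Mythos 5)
Your argument is correct, but it is organized differently from the paper's. The paper proves the lemma computationally: it first verifies both cancellation laws (from $x*y=x*z$ it deduces $z^{-1}y\in K$ and then $k=1$), and then exhibits the explicit solutions $y=x^{-1}z\mu(x,x^{-1}z)^{-1}$ and $x=zy^{-1}\mu(zy^{-1},y)^{-1}$ to the division equations. You instead argue structurally: each translation of $(G,*)$ permutes the $K$-cosets exactly as the corresponding group translation does, and on each coset it differs from the group translation by right multiplication by a single element of $Z\le K$, hence restricts to a bijection onto the image coset; gluing over the coset partition gives a global bijection. Both proofs rest on the same two facts (that $\mu$ is constant on $K$-cosets and that its values lie in $K$), so neither is more general, but your blockwise version is arguably more conceptual, while the paper's version has the practical advantage of producing the closed formula $L_a^{-1}(b)=a^{-1}b\mu(a,a^{-1}b)^{-1}$, which is reused verbatim in the proof of Lemma \ref{Lm:Conjugations}; your argument would need a small extra step to extract that formula. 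One minor misattribution: you credit $Z\le Z(G)$ with making the per-coset correction a bijection, but right multiplication by \emph{any} group element is a bijection of $G$, and $gKz=gK$ already follows from $z\in K$ alone; in fact your proof (like the paper's left-division and cancellation steps) uses only $Z\le K$ and $K\unlhd G$, with centrality genuinely needed only in the paper's right-division formula. This does not affect the validity of what you wrote.
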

\begin{proof}
We have $x*1= x = 1*x$ since $1\in K$. Assume that $x*y = x*z$ for some $x$,
$y$, $z\in G$. Then $xy\mu(x,y) = xz\mu(x,z)$, hence
$z^{-1}y=\mu(x,z)\mu(x,y)^{-1}\in K$, and so $z=yk$ for some $k\in K$. Thus $
xy\mu(x,y) = x*y = x*z = xyk\mu(x,yk) = xyk\mu(x,y)$, so $k=1$ and $y=z$.
Similarly, if $y*x =  z*x$ then $y=z$.

Note that $\mu(x,y\mu(u,v)) = \mu(x,y)$. Then for $x$, $z\in G$, we have
$x*x^{-1}z\mu(x,x^{-1}z)^{-1} = z$, so $y=x^{-1}z\mu(x,x^{-1}z)^{-1}$ is the
unique solution to $x*y=z$. Similarly, given $y$, $z\in Q$,
$x=zy^{-1}\mu(zy^{-1},y)^{-1}$ is the unique solution to $x*y=z$.
\end{proof}

\begin{lemma}\label{Lm:ZInCenters}
$Z\le Z(G)\cap Z(Q)$, and $G/Z\cong Q/Z$ is a group.
\end{lemma}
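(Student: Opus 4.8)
The plan is to prove the two assertions separately, exploiting the two distinct roles of the hypothesis $Z \le K \le N$ with $Z \le Z(G)$: membership in $K$ annihilates the relevant values of $\mu$, while membership in $Z(G)$ lets central factors move freely through the group product. Throughout I would repeatedly use the fact that inserting or deleting a factor lying in $K$ inside either argument of $\mu$ does not change the $K$-coset of that argument, and hence does not change the value of $\mu$, together with the fact that $\mu$ takes values in $Z \le Z(G)$ and so commutes past everything.

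To establish $Z \le Z(Q)$, I would first note that since $Z \le K$, for any $z \in Z$ and $g \in G$ we have $zK = K$, whence $\mu(z,g) = \mu(K,gK) = 1$ and $\mu(g,z) = \mu(gK,K) = 1$. Thus $z*g = zg$ and $g*z = gz$ collapse to the group product, and because $z \in Z(G)$ these agree, giving $z*g = g*z$; so every $z \in Z$ commutes with all of $Q$. Next I would verify $z \in N(Q)$ through the three associator conditions. For the left one, $(z*y)*w = (zy)*w = zyw\,\mu(y,w)$ (using $(zy)K = yK$), while $z*(y*w) = z\cdot yw\mu(y,w) = zyw\,\mu(y,w)$ (using $\mu(z,\cdot)=1$), so the two coincide; the middle condition follows the same way from $(yz)K = yK$ and $(zw)K = wK$, and the right condition, $(y*w)*z$ versus $y*(w*z)$, coincides after one additionally uses $z \in Z(G)$ to transport $z$ across $yw$. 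Combining commutativity with $z \in N(Q)$ yields $Z \le Z(Q)$.

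For the second assertion, since $Z \le Z(Q)$ each element of $Z$ is fixed by every inner mapping, so $Z$ is a normal subloop and the factor loop $Q/Z$ is defined. Because $z*g = zg$ for $z \in Z$ and $Z \le Z(G)$, the cosets of $Z$ in the loop $Q$ coincide, as sets, with the cosets of $Z$ in the group $G$. I would then show the identity map on this common partition is an isomorphism $Q/Z \to G/Z$: in $Q/Z$ one has $(gZ)*(g'Z) = (g*g')Z = (gg'\mu(g,g'))Z = gg'Z$ since $\mu(g,g') \in Z$, which is precisely the product $gZ\cdot g'Z$ in $G/Z$. As $G/Z$ is a group, so is $Q/Z$, giving $G/Z \cong Q/Z$.

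The only genuine work is the three associator identities in the nucleus check, and the single point one must get right there is the interplay just described—that a factor in $K$ may be inserted into an argument of $\mu$ without effect, while a factor in $Z(G)$ may be commuted across the underlying group multiplication. Once this bookkeeping is in place, every step is formal.
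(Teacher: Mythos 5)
Your proof is correct and follows essentially the same route as the paper: both verify $z*x=x*z$ and the three nucleus identities by direct computation, using that $z\in K$ kills the relevant values of $\mu$ and that $z\in Z(G)$ commutes in $G$, and both then read off $G/Z\cong Q/Z$ from \eqref{Eq:Q} since $\mu$ takes values in $Z$. The only cosmetic difference is that you check the right-nucleus identity directly, whereas the paper deduces it from the middle one via commutativity.
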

\begin{proof}
Let $z\in Z\le Z(G)\cap K$, $x$, $y\in G$. Then
\begin{align*}
    &z*x = zx = xz = x*z,\\
    &z*(x*y) = z(x*y) = zxy\mu(x,y) = zx*y = (z*x)*y,\\
    &x*(z*y) = x*zy = xzy\mu(x,y) = xz*y = (x*z)*y,\\
    &x*(y*z) = x*(z*y) = (x*z)*y = (z*x)*y = z*(x*y) = (x*y)*z.
\end{align*}
Thus $Z\le Z(Q)$, and $G/Z\cong Q/Z$ follows by the definition \eqref{Eq:Q}.
\end{proof}

Since $Q/Z\cong G/Z$ is a group, we have $A(Q)\le Z$, and thus $A(Q)\le Z(Q)\le
N(Q)\unlhd Q$. Such a situation has some well-known general consequences. For
instance, the associator $[x,y,z]$ depends only upon classes modulo $N(Q)$, by
\cite[Lemma 4.2]{KKP}. We will use this property freely.

Denote by $L_x$, $R_x$ the translations by $x$ in $Q$, rather than in $G$. For
convenience, allow us to redefine inner mappings for $Q$ by
\begin{displaymath}
    L(x,y) = L_y^{-1}L_x^{-1}L_{x*y},\quad
    R(x,y) = R^{-1}_{x*y}R_yR_x,\quad
    T(x) = L_x^{-1}R_x.
\end{displaymath}

\begin{lemma}\label{Lm:LRCommute}
We have $L(x,y)z = z[x,y,z]$ and $R(x,y)z = z[z,x,y]$. The subgroup $\langle
L(x,y),R(x,y);\;x$, $y\in G\rangle$ is abelian.
\end{lemma}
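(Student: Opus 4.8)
The plan is to establish the two displayed formulas first and then deduce commutativity from them. For $L(x,y)z = z[x,y,z]$, I would unwind the redefined inner mapping: by definition $L(x,y)z$ is the unique $v$ with $x*(y*v) = (x*y)*z$, obtained by applying $L_{x*y}$, then $L_x^{-1}$, then $L_y^{-1}$ to $z$. The associator identity gives $(x*y)*z = (x*(y*z))[x,y,z]$, and since $A(Q)\le Z$ (noted right after Lemma \ref{Lm:ZInCenters}) the element $[x,y,z]$ lies in $Z\le Z(G)\cap Z(Q)$. In particular, multiplication by $[x,y,z]$ agrees in $G$ and in $Q$, because $\mu(z,w)=\mu(zK,K)=1$ whenever $w\in Z\le K$; hence $z[x,y,z]=z*[x,y,z]$, and I can slide this central factor through the loop products: $x*(y*(z*[x,y,z])) = (x*(y*z))*[x,y,z] = (x*(y*z))[x,y,z] = (x*y)*z$. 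Thus $v = z*[x,y,z] = z[x,y,z]$, as claimed. The formula $R(x,y)z = z[z,x,y]$ is entirely analogous, using $R_x$, $R_y$, $R_{x*y}^{-1}$ and the identity $(z*x)*y = (z*(x*y))[z,x,y]$.

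For the abelian claim, the key observation is that every generator now has the shape $z\mapsto z\,f(z)$ with $f(z)\in Z\le Z(G)$ an associator value, namely $f(z)=[x,y,z]$ for $L(x,y)$ and $f(z)=[z,x,y]$ for $R(x,y)$. I would take two such maps $\phi_1\colon z\mapsto z f_1(z)$ and $\phi_2\colon z\mapsto z f_2(z)$ and compose them, obtaining $\phi_1\phi_2(z) = z f_2(z)\,f_1(zf_2(z))$. The decisive point is that $f_1$ depends only on the class of its argument modulo $N(Q)$, the property recorded after Lemma \ref{Lm:ZInCenters} via \cite[Lemma 4.2]{KKP}; since $f_2(z)\in Z\le Z(Q)\le N(Q)$ and $N(Q)\unlhd Q$, the elements $z*f_2(z)$ and $z$ lie in the same coset of $N(Q)$, whence $f_1(zf_2(z))=f_1(z)$. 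Because $f_1(z),f_2(z)\in Z(G)$ commute, this yields $\phi_1\phi_2(z) = z f_1(z)f_2(z) = \phi_2\phi_1(z)$, so the generators commute pairwise. A group generated by pairwise commuting elements is abelian, which finishes the proof.

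The steps requiring the most care are purely bookkeeping: verifying that each central associator may legitimately be moved across the nonassociative product $*$, which rests on the twin facts $[x,y,z]\in Z\le Z(Q)$ and that products by elements of $K$ are unaffected by $\mu$; and confirming $Z\le N(Q)$ so that the class-invariance of the associator applies to the correction term $f_2(z)$. Neither is deep, but both must be invoked explicitly to keep every manipulation valid in the loop $Q$ rather than in the group $G$.
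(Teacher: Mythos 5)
Your proof is correct and follows essentially the same route as the paper: first verify the displayed formulas using $[x,y,z]\in Z\le Z(Q)$, then derive commutativity from the fact that associators depend only on arguments modulo $N(Q)$ together with the centrality of their values. The only cosmetic difference is that you treat all generators uniformly as maps $z\mapsto z\,f(z)$, whereas the paper checks the three compositions $LL$, $RR$, $LR$ separately; the underlying argument is identical.
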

\begin{proof}
$L(x,y)z = z*[x,y,z]$ is equivalent to $(x*y)*z = x*(y*(z*[x,y,z]))$, which
holds since $[x,y,z]\in Z\le Z(Q)$, by Lemma \ref{Lm:ZInCenters}. Similarly,
$R(x,y)z = z*[z,x,y]$ is equivalent to $(z*x)*y = (z*[z,x,y])*(x*y)$, which
holds for the same reason.

Then
\begin{equation}\label{Eq:TwoLeftInner}
    L(x,y)L(u,v)z = L(x,y)(z[u,v,z]) = z[u,v,z][x,y,z[u,v,z]] = z[u,v,z][x,y,z],
\end{equation}
and $L(u,v)L(x,y)z = z[x,y,z][u,v,z]$. Also,
\begin{equation}\label{Eq:TwoRightInner}
    R(x,y)R(u,v)z = R(x,y)(z[z,u,v]) = z[z,u,v][z[z,u,v],x,y] = z[z,u,v][z,x,y],
\end{equation}
and $R(u,v)R(x,y)z = z[z,x,y][z,u,v]$. Finally,
\begin{displaymath}
    L(x,y)R(u,v)z = L(x,y)(z[z,u,v]) = z[z,u,v][x,y,z[z,u,v]] =
    z[z,u,v][x,y,z],
\end{displaymath}
and
\begin{displaymath}
    R(u,v)L(x,y)z = R(u,v)(z[x,y,z]) = z[x,y,z][z[x,y,z],u,v] =
    z[x,y,z][z,u,v].
\end{displaymath}
\end{proof}

Let $y^x=x^{-1}yx$, and $[y,x]=y^{-1}y^x$. The following lemma shows how
conjugations differ in $G$ and $Q$:

\begin{lemma}\label{Lm:Conjugations}
$T(x)y = y^x\mu(y,x)\mu(x,y^x)^{-1}$.
\end{lemma}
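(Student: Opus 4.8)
The plan is to unwind the definition of $T(x)$ in $Q$ and reduce the claim to a single identity of the form $x*z = y*x$, which can then be checked by a direct computation in the group $G$. Recall that inner mappings for $Q$ were redefined by $T(x) = L_x^{-1}R_x$, so that $T(x)y = L_x^{-1}(R_x y) = L_x^{-1}(y*x)$. Since $L_x^{-1}(w)$ is by definition the unique $z$ with $x*z = w$, the element $T(x)y$ is characterized as the unique $z\in Q$ satisfying $x*z = y*x$. Hence it suffices to substitute the claimed value $z = y^x\mu(y,x)\mu(x,y^x)^{-1}$, with $y^x = x^{-1}yx$, and verify that $x*z$ really equals $y*x$.

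For the verification I would work entirely inside the group $(G,\cdot)$, using two facts. First, every value of $\mu$ lies in $Z\le Z(G)$, so these values commute with all of $G$ and may be slid freely past any factor. Second, $\mu$ is well-defined on $G/K\times G/K$, and $Z\le K$; consequently $\mu$ is unaffected by multiplying either argument by a central element, which is exactly the cancellation $\mu(x,y\mu(u,v))=\mu(x,y)$ already recorded in the proof of Lemma \ref{Lm:IsLoop}. Applying this to $z$, which differs from $y^x$ only by the central factor $\mu(y,x)\mu(x,y^x)^{-1}\in Z\le K$, gives $\mu(x,z)=\mu(x,y^x)$.

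With these in hand the computation closes immediately. By definition of $*$ we have $y*x = yx\mu(y,x)$, while $x*z = xz\,\mu(x,z)$. Expanding $xz = x\cdot x^{-1}yx\cdot\mu(y,x)\mu(x,y^x)^{-1} = yx\,\mu(y,x)\mu(x,y^x)^{-1}$ and inserting $\mu(x,z)=\mu(x,y^x)$ yields $x*z = yx\,\mu(y,x)\mu(x,y^x)^{-1}\mu(x,y^x) = yx\,\mu(y,x) = y*x$, as required. The argument is routine, so there is no genuine obstacle; the only step needing care is the reduction $\mu(x,z)=\mu(x,y^x)$, which is precisely where the hypotheses $Z\le K$ and the well-definedness of $\mu$ on $G/K\times G/K$ are used.
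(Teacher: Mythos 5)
Your proof is correct and follows essentially the same route as the paper: the paper computes $T(x)y=L_x^{-1}(y*x)$ via the explicit formula $L_a^{-1}(b)=a^{-1}b\mu(a,a^{-1}b)^{-1}$ from the proof of Lemma \ref{Lm:IsLoop}, whereas you verify directly that the candidate $z$ solves $x*z=y*x$; both arguments rest on the same two facts, namely $y*x=yx\mu(y,x)$ and the invariance $\mu(x,y^x\mu(y,x))=\mu(x,y^x)$ coming from $Z\le K$.
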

\begin{proof}
Note that $L_a^{-1}(b)$ is the unique solution $y$ to the equation $a*y=b$.
Thus, as we have showed in the proof of Lemma \ref{Lm:IsLoop}, $L_a^{-1}(b) =
a^{-1}b\mu(a,a^{-1}b)^{-1}$. Then
\begin{displaymath}
    T(x)y = L_x^{-1}R_x(y) = x^{-1}R_x(y)\mu(x,x^{-1}R_x(y))^{-1},
\end{displaymath}
and we are done by $x^{-1}R_x(y) = x^{-1}(y*x) = x^{-1}yx\mu(y,x) =
y^x\mu(y,x)$.
\end{proof}

Define $\delta:G/K\times G/K\to Z$ by
\begin{displaymath}
    \delta(x,y) = \mu(x,y)\mu(y,x)^{-1}.
\end{displaymath}

Consider these conditions on $\mu$ and $\delta$, that we have observed while
attempting to construct $C$ and $\overline{C}$ by extensions:
\begin{align}
    &\mu(xy,z) = \mu(x,z)\mu(y,z)\text{ if $\{x,y,z\}\cap N\ne\emptyset$},\label{Eq:C1}\\
    &\mu(x,yz) = \mu(x,y)\mu(x,z)\text{ if $\{x,y,z\}\cap N\ne\emptyset$},\label{Eq:C2}\\
    &z^{yx}\delta([z,y],x) = z^{xy}\delta([z,x],y).\label{Eq:C3}
\end{align}

\begin{lemma}\label{Lm:TLRCommute}
Suppose that \eqref{Eq:C1}, \eqref{Eq:C2} hold. Then $N\le N(Q)$, and $T(x)$
commutes with $L(u,v)$, $R(u,v)$.
\end{lemma}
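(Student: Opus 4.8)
The plan is to prove the two assertions separately; I expect only the commuting relations to use the structural hypotheses in an essential way, and \eqref{Eq:C3} to be irrelevant to this lemma.

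\textbf{First, $N\le N(Q)$.} I would check directly that every $n\in N$ associates in all three positions. Since each associator value produced lies in $Z\le Z(G)$ and $\mu$ takes values in $Z\le K$, all central factors may be pulled freely to one side and every $\mu$-argument read modulo $K$. Expanding $x*y=xy\mu(x,y)$, the identity $(n*x)*y=n*(x*y)$ reduces to $\mu(n,x)\mu(nx,y)=\mu(x,y)\mu(n,xy)$; applying \eqref{Eq:C1} to $\mu(nx,y)$ and \eqref{Eq:C2} to $\mu(n,xy)$, which is legitimate because $n\in N$, makes both sides equal to $\mu(n,x)\mu(n,y)\mu(x,y)$. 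The same bookkeeping disposes of $(x*n)*y=x*(n*y)$ and $(x*y)*n=x*(y*n)$, so $n\in N_\lambda(Q)\cap N_\mu(Q)\cap N_\rho(Q)=N(Q)$.

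\textbf{Second, the commuting relations.} I would reduce each of $T(x)L(u,v)=L(u,v)T(x)$ and $T(x)R(u,v)=R(u,v)T(x)$ to a single associator equality. The engine is that $T(x)$ is ``$Z$-linear on the right'': if $c\in Z$ then, using the formula of Lemma \ref{Lm:Conjugations}, one has $(zc)^x=z^xc$ because $c\in Z(G)$, and $\mu(zc,x)=\mu(z,x)$, $\mu(x,z^xc)=\mu(x,z^x)$ because $c\in K$, whence $T(x)(zc)=T(x)(z)\,c$. Applying this with $c=[u,v,z]\in Z$ (Lemma \ref{Lm:ZInCenters} gives $A(Q)\le Z$) and invoking $L(u,v)z=z[u,v,z]$ from Lemma \ref{Lm:LRCommute},
\[
T(x)\big(L(u,v)z\big)=T(x)(z)\,[u,v,z],\qquad L(u,v)\big(T(x)z\big)=T(x)(z)\,[u,v,T(x)z],
\]
so $T(x)$ and $L(u,v)$ commute if and only if $[u,v,z]=[u,v,T(x)z]$. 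The analogous computation with $R(u,v)z=z[z,u,v]$ shows that $T(x)$ and $R(u,v)$ commute if and only if $[z,u,v]=[T(x)z,u,v]$.

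\textbf{The remaining point, and the main obstacle.} It remains to see that $z$ and $T(x)z$ lie in a common class modulo $N(Q)$, for then both displayed equalities follow from the cited fact (\cite[Lemma 4.2]{KKP}) that associators depend only on such classes. Here the hypotheses on $G$ enter. By Lemma \ref{Lm:Conjugations}, $T(x)z=z^x\mu(z,x)\mu(x,z^x)^{-1}=zm$ as a product in $G$, where $m=[z,x]\mu(z,x)\mu(x,z^x)^{-1}$; now $[z,x]\in[G,G]\le N$ because $G/N$ is abelian, while $\mu(z,x),\mu(x,z^x)\in Z\le N$, so $m\in N$. Converting this group product into the loop, one checks that $T(x)z=(z*m)*\mu(z,m)^{-1}$ with both $m$ and $\mu(z,m)^{-1}$ lying in $N\le N(Q)$ (the $\mu$-corrections vanish because their second argument lies in $K$); hence $T(x)z$ is obtained from $z$ by right multiplication in $Q$ by nuclear elements, so lies in $z*N(Q)$. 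I expect the genuinely delicate step to be exactly this passage between the group product $zm$ and the loop coset $z*N(Q)$, and I would keep careful track of the fact that multiplication by elements of $Z$ coincides in $G$ and in $Q$, which is what allows the displayed identities to be read in either structure. Finally, note that \eqref{Eq:C3} plays no role here; it will be required only afterwards, to force the mappings $T(x)$ to commute among themselves.
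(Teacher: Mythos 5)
Your proposal is correct and follows essentially the same route as the paper: the nuclearity of $N$ comes from expanding the three associator identities and applying \eqref{Eq:C1}--\eqref{Eq:C2} (the paper packages this as the formula $[x,y,z]=\mu(x,y)\mu(xy,z)\mu(x,yz)^{-1}\mu(y,z)^{-1}$), and the commuting relations reduce, exactly as in the paper, to $[u,v,T(x)z]=[u,v,z]$ and $[T(x)z,u,v]=[z,u,v]$, settled because $T(x)z$ and $z$ agree modulo $N\le N(Q)$ since $[z,x]\in G'\le N$ and the $\mu$-corrections lie in $Z\le N$. Your extra care about passing between the group product $zm$ and the loop coset $z*N(Q)$ makes explicit a step the paper leaves tacit, but it is the same argument.
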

\begin{proof}
It is easy to see that
\begin{equation}\label{Eq:Associator}
    [x,y,z] = \mu(x,y)\mu(xy,z)\mu(x,yz)^{-1}\mu(y,z)^{-1}\in Z.
\end{equation}
Then $[x,y,z]=1$ whenever $\{x,y,z\}\cap N\ne\emptyset$, and so $N\le N(Q)$.

By Lemmas \ref{Lm:LRCommute} and \ref{Lm:Conjugations},
\begin{displaymath}
    L(u,v)T(x)y = (T(x)y)[u,v,T(x)y] = y^x \mu(y,x)\mu(x,y^x)^{-1}[u,v,y^x],
\end{displaymath}
and
\begin{align*}
    T(x)L(u,v)y &= T(x)(y[u,v,y]) = (y[u,v,y])^x
    \mu(y[u,v,y],x)\mu(x,(y[u,v,y])^x)^{-1}\\
        &= y^x[u,v,y]\mu(y,x)\mu(x,y^x)^{-1}.
\end{align*}
So the equality $L(u,v)T(x)y = T(x)L(u,v)y$ holds if and only if
$[u,v,y^x]=[u,v,y]$. This is true since $y^x=y[y,x]$, and $[y,x]\in G'\le N\le
N(Q)$.

Similarly
\begin{displaymath}
    R(u,v)T(x)y = (T(x)y)[T(x)y,u,v] = y^x \mu(y,x)\mu(x,y^x)^{-1}[y^x,u,v],
\end{displaymath}
and
\begin{align*}
    T(x)R(u,v)y &= T(x)(y[y,u,v]) = (y[y,u,v])^x
    \mu(y[y,u,v],x)\mu(x,(y[y,u,v])^x)^{-1}\\
        &= y^x[y,u,v]\mu(y,x)\mu(x,y^x)^{-1}.
\end{align*}
So the equality $R(u,v)T(x)y = T(x)R(u,v)y$ holds if and only if
$[y^x,u,v]=[y,u,v]$, and we finish as before.
\end{proof}

\begin{proposition}\label{Pr:InnAbelian}
Assume that \eqref{Eq:C1}, \eqref{Eq:C2} hold. Then $\inn Q$ is abelian if and
only if \eqref{Eq:C3} holds.
\end{proposition}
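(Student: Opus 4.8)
The plan is to reduce the statement to a single commutation condition among the middle inner mappings and then to match that condition with \eqref{Eq:C3} by a direct, if delicate, computation. First I would record that $\inn Q$ is generated by the maps $L(x,y)$, $R(x,y)$, $T(x)$. By Lemma \ref{Lm:LRCommute} the $L(x,y)$ and $R(x,y)$ already generate an abelian subgroup, and by Lemma \ref{Lm:TLRCommute} (whose hypotheses \eqref{Eq:C1}, \eqref{Eq:C2} we are assuming) each $T(x)$ commutes with every $L(u,v)$ and every $R(u,v)$. Hence the only possible obstruction to $\inn Q$ being abelian is the failure of the $T(x)$ to commute among themselves, so it suffices to prove that $T(x)T(y)=T(y)T(x)$ for all $x$, $y\in G$ if and only if \eqref{Eq:C3} holds.

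To analyze $T(x)T(y)$ I would apply Lemma \ref{Lm:Conjugations} twice. Writing $T(y)z = z^{y}c$ with $c = \mu(z,y)\mu(y,z^{y})^{-1}\in Z\le K$, and using that $c$ is central and lies in $K$ (hence invisible to $\mu$, which depends only on $K$-cosets), a second application yields $T(x)T(y)z = z^{yx}P$, where $P = \mu(z,y)\mu(y,z^{y})^{-1}\mu(z^{y},x)\mu(x,z^{yx})^{-1}$; the symmetric computation gives $T(y)T(x)z = z^{xy}P'$ with $P'$ the corresponding product obtained by swapping $x$ and $y$. Since $P$ and $P'$ are central, the identity $T(x)T(y)=T(y)T(x)$ is equivalent to $(z^{xy})^{-1}z^{yx} = P'P^{-1}$ for all $x$, $y$, $z$. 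The remaining work is to evaluate the two sides of this equation separately.

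For the group-theoretic side I would use that $G'\le N$ with $N$ abelian and, crucially, that $[N,G]\le K$ (this is exactly the hypothesis $N/K\le Z(G/K)$). Expanding $z^{yx} = z^{x}\,[z,y]^{x}$ with $z^{x}=z[z,x]$ and $[z,y]^{x}=[z,y][[z,y],x]$, and using that $N$ is abelian to reorder the two commutators $[z,x]$, $[z,y]$, I obtain $z^{yx} = z[z,x][z,y][[z,y],x]$ and, symmetrically, $z^{xy} = z[z,x][z,y][[z,x],y]$. The common prefix $z[z,x][z,y]$ then cancels, leaving the clean identity $(z^{xy})^{-1}z^{yx} = [[z,x],y]^{-1}[[z,y],x]$, an element of $K$ since both triple commutators lie in $[N,G]\le K$.

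The main obstacle is the bookkeeping on the $\mu$-side, namely showing $P'P^{-1} = \delta([z,x],y)\delta([z,y],x)^{-1}$. Here I would repeatedly invoke \eqref{Eq:C1} and \eqref{Eq:C2}: each factor such as $\mu(z^{x},y)=\mu(z[z,x],y)$ or $\mu(x,z^{yx})$ splits additively because the commutators $[z,x]$, $[z,y]$ lie in $N$, so that one argument always meets $N$; moreover every resulting term involving a triple commutator $[[z,x],y]$ or $[[z,y],x]$ drops out, since these lie in $K$ and $\mu$ is trivial there. After substituting these expansions into $P$ and $P'$ and cancelling, only the four terms $\mu([z,x],y)$, $\mu(y,[z,x])^{-1}$, $\mu([z,y],x)^{-1}$, $\mu(x,[z,y])$ survive in $P'P^{-1}$, and these reassemble into $\delta([z,x],y)\delta([z,y],x)^{-1}$ by the definition of $\delta$. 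Combining the two sides, $T(x)T(y)=T(y)T(x)$ becomes $[[z,x],y]^{-1}[[z,y],x] = \delta([z,x],y)\delta([z,y],x)^{-1}$, which (both $\delta$-values being central) is precisely the rearrangement of \eqref{Eq:C3}. This establishes the desired equivalence.
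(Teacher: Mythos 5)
Your proposal is correct and follows essentially the same route as the paper: reduce to commutation of the middle inner mappings via Lemmas \ref{Lm:LRCommute} and \ref{Lm:TLRCommute}, compute $T(x)T(y)z$ by iterating Lemma \ref{Lm:Conjugations}, expand the $\mu$-factors using \eqref{Eq:C1}, \eqref{Eq:C2} and the fact that $[[z,x],y]\in K$, and observe that after cancellation the condition $T(x)T(y)=T(y)T(x)$ is exactly \eqref{Eq:C3}. The only cosmetic difference is that you additionally rewrite $(z^{xy})^{-1}z^{yx}$ as $[[z,x],y]^{-1}[[z,y],x]$, which the paper leaves implicit.
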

\begin{proof}
In view of Lemmas \ref{Lm:LRCommute} and \ref{Lm:TLRCommute}, it suffices to
show that $T(x)T(y) = T(y)T(x)$ for every $x$, $y$. Using Lemma
\ref{Lm:Conjugations}, a straightforward calculation yields
\begin{equation}\label{Eq:TwoMiddleInner}
    T(x)T(y)z =
    z(yxz)^{-1}(zyx)\mu(z^y,x)\mu(x,z^{yx})^{-1}\mu(z,y)\mu(y,z^y)^{-1}.
\end{equation}
Sine $G'\le N$, we have
\begin{align*}
    &\mu(z^y,x) = \mu(z[z,y],x) = \mu(z,x)\mu([z,y],x),\\
    &\mu(y,z^y) = \mu(y,z[z,y]) = \mu(y,z)\mu(y,[z,y]).
\end{align*}
In any group, $[z,xy] = [z,y][z,x][[z,x],y]$, and since $[[z,x],y]$ belongs to
$K$ (as $N/K\le Z(G/K)$), we have
\begin{displaymath}
    \mu(x,z^{yx}) = \mu(x,z[z,yx]) = \mu(x,z)\mu(x,[z,yx])
     = \mu(x,z)\mu(x,[z,y])\mu(x,[z,x]).
\end{displaymath}
Putting all these facts together, we can rewrite $T(x)T(y)z$ as
\begin{multline*}
    z\mu(z,x)\mu(x,z)^{-1}\mu(z,y)\mu(y,z)^{-1}\\
        (yxz)^{-1}(zyx)\mu([z,y],x)\mu(x,[z,y])^{-1}\\
        \mu(x,[z,x])^{-1}\mu(y,[z,y])^{-1}.
\end{multline*}
Upon interchanging $x$ and $y$, we deduce that $T(y)T(x)z$ is equal to
\begin{multline*}
    z\mu(z,x)\mu(x,z)^{-1}\mu(z,y)\mu(y,z)^{-1}\\
        (xyz)^{-1}(zxy)\mu([z,x],y)\mu(y,[z,x])^{-1}\\
        \mu(x,[z,x])^{-1}\mu(y,[z,y])^{-1}.
\end{multline*}
The result then follows.
\end{proof}

\subsection{Consequences of the conditions}\label{Ss:Consequences}

\begin{proposition}\label{Pr:ClassAtMost3}
Assume that \eqref{Eq:C3} holds. Then both $G$ and $Q$ are of nilpotency class
at most three.
\end{proposition}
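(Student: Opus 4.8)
The plan is to extract from \eqref{Eq:C3} a single structural fact, namely that $\gamma_3(G)\le Z$, and then to read off the nilpotency of both $G$ and $Q$ from it. Granting $\gamma_3(G)\le Z$ for the moment, the two conclusions are immediate: since $Z\le Z(G)$, we get $\gamma_4(G)=[\gamma_3(G),G]\le[Z,G]=1$, so $G$ has class at most three; and since $\gamma_3(G)\le Z$ makes $G/Z$ a group of class at most two, while $Q/Z\cong G/Z$ with $Z\le Z(Q)$ by Lemma \ref{Lm:ZInCenters}, the standard fact that a central extension raises the nilpotency class of a loop by at most one yields that $Q$ also has class at most three. So everything hinges on proving $\gamma_3(G)\le Z$.

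The heart of the argument is to reinterpret \eqref{Eq:C3}. Its right-hand side is built solely from values of $\delta$, all of which lie in $Z$, so \eqref{Eq:C3} asserts precisely that $z^{yx}(z^{xy})^{-1}\in Z$ for all $x,y,z\in G$. I would then rewrite this commutator-theoretically: from $xy=yx[x,y]$ one obtains $z^{xy}=(z^{yx})^{[x,y]}$, whence
\begin{displaymath}
    z^{yx}(z^{xy})^{-1}=z^{yx}\bigl((z^{yx})^{[x,y]}\bigr)^{-1}=[(z^{yx})^{-1},[x,y]].
\end{displaymath}
Therefore $[(z^{yx})^{-1},[x,y]]\in Z$ for all $x,y,z$. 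For fixed $x,y$ the map $z\mapsto(z^{yx})^{-1}$ is a bijection of $G$, so in fact $[t,[x,y]]\in Z$ holds for every $t\in G$ and all $x,y\in G$.

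It then remains to upgrade this from the generators $[x,y]$ of $G'$ to all of $G'$. Because $Z\le Z(G)$, every element of $Z$ is fixed by conjugation, so expanding $[t,c]$ for $c=\prod_i[x_i,y_i]^{\pm 1}$ by the standard rules $[t,ab]=[t,b][t,a]^b$ keeps each partial commutator inside $Z$; hence $[t,c]\in Z$ for all $t\in G$ and $c\in G'$, that is, $\gamma_3(G)=[G,G']\le Z$, as required. I expect the one genuinely substantive step to be the first: recognizing that \eqref{Eq:C3} is nothing but the assertion that conjugation by $xy$ and by $yx$ agree modulo $Z$, and thus secretly encodes $\gamma_3(G)\le Z$. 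Once this is seen, the commutator bookkeeping and the passage to $Q$ via Lemma \ref{Lm:ZInCenters} are routine; it is worth noting that this particular argument uses neither the commutativity of $N$ nor the precise shape of $\mu$, only that the values of $\delta$ lie in $Z\le Z(G)$.
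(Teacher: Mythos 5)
Your proof is correct and takes essentially the same route as the paper's: both arguments extract from \eqref{Eq:C3} that $[G,G']\le Z$ (the paper via the identity $z^{-xy}z^{yx}=(xy)^{-1}[z,[y^{-1},x^{-1}]](xy)$, you via $z^{yx}(z^{xy})^{-1}=[(z^{yx})^{-1},[x,y]]$) and then conclude using the central series $1\le Z\le G'Z\le G$, transferred to $Q$ through $Q/Z\cong G/Z$ and $Z\le Z(Q)$. Your explicit upgrade from the generators $[x,y]$ to all of $G'$ is a detail the paper leaves implicit, but it is not a different approach.
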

\begin{proof}
By \eqref{Eq:C3}, $z^{-xy}z^{yx}=\delta([z,x],y)\delta([z,y],x)^{-1}\in Z \le
Z(G)$. Since $z^{-xy}z^{yx} = (xy)^{-1}[z,[y^{-1},x^{-1}]](xy)$ holds in any
group,
\begin{equation}\label{Eq:Aux1}
    [z,[y^{-1},x^{-1}]]=\delta([z,x],y)\delta([z,y],x)^{-1}=z^{-xy}z^{yx}
\end{equation}
follows.

Set $U = G'Z$ and consider the series $1 \le Z \le U \le G$. We have $Z \le
Z(G)\cap Z(Q)$ by Lemma \ref{Lm:ZInCenters}, and $U/Z \le Z(G/Z)$ by
\eqref{Eq:Aux1}. The latter inclusion holds for both operations, as $G/Z\cong
Q/Z$ by Lemma \ref{Lm:ZInCenters}.
\end{proof}

Although we originally believed that $Q$ of nilpotency class three cannot be
obtained from $G$ of nilpotency class two, it turns out that it can happen and
that ample examples exist. We therefore focus on the (simpler) case when $G$ is
of nilpotency class two and $Q$ is of nilpotency class three.

As an immediate consequence of \eqref{Eq:Aux1}, we have:

\begin{corollary}\label{Cr:GroupClass2} Assume that \eqref{Eq:C3} holds. Then
the following conditions are equivalent:
\begin{enumerate}
\item[(i)] $G$ is of nilpotency class at most two,
\item[(ii)] $z^{xy}=z^{yx}$ for every $x$, $y$, $z\in G$,
\item[(iii)] $\delta([z,x],y)=\delta([z,y],x)$ for every $x$, $y$, $z\in G$.
\end{enumerate}
\end{corollary}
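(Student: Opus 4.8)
The plan is to deduce everything from the triple identity \eqref{Eq:Aux1}, which already expresses the single element $z^{-xy}z^{yx}$ in two different ways: as the iterated commutator $[z,[y^{-1},x^{-1}]]$, and as $\delta([z,x],y)\delta([z,y],x)^{-1}$. All three conditions in the corollary are assertions about the vanishing of this one quantity (over all choices of $x$, $y$, $z$), so the equivalences should fall out with essentially no further computation.

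First I would dispatch (ii)~$\Leftrightarrow$~(iii). By \eqref{Eq:Aux1} we have $z^{-xy}z^{yx} = \delta([z,x],y)\delta([z,y],x)^{-1}$, and since the $\delta$-values lie in $Z$, this element is trivial if and only if $\delta([z,x],y)=\delta([z,y],x)$. On the other side, $z^{xy}=z^{yx}$ is plainly equivalent to $z^{-xy}z^{yx}=1$. Thus, for each fixed triple $x$, $y$, $z$, conditions (ii) and (iii) make exactly the same demand, and the equivalence follows immediately.

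Next I would handle (i)~$\Leftrightarrow$~(ii). Using the other half of \eqref{Eq:Aux1}, $z^{xy}=z^{yx}$ holds for all $z$ precisely when $[z,[y^{-1},x^{-1}]]=1$ for all $z$, i.e.\ when $[y^{-1},x^{-1}]$ is central. As $x$, $y$ range over $G$, the elements $[y^{-1},x^{-1}]$ run through the full set of commutators of $G$, so (ii) is equivalent to the statement that every commutator of $G$ is central. Since centralizers are subgroups, this in turn is equivalent to $G'\le Z(G)$, which is exactly nilpotency class at most two, giving (i).

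The only point requiring a moment's care is the quantifier bookkeeping in this last step: I must use both that ``$[y^{-1},x^{-1}]$ central for all $x$, $y$'' matches ``all commutators central,'' and that the latter forces the entire commutator subgroup (not merely the individual commutators) into the center. Both are standard facts about groups, so I expect no genuine obstacle; the remainder is a direct reading of \eqref{Eq:Aux1}.
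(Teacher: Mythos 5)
Your proposal is correct and matches the paper's intended argument: the paper simply states the corollary as ``an immediate consequence of \eqref{Eq:Aux1}'', and your write-up is precisely the spelling-out of that consequence (the middle equality gives (ii)~$\Leftrightarrow$~(iii), the left equality plus $G'\le Z(G)$ gives (i)~$\Leftrightarrow$~(ii)). No gaps.
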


In view of Proposition \ref{Pr:ClassAtMost3}, the following result is relevant.
It is a consequence of the Hall-Witt identity for groups
\begin{displaymath}
    [[x,y^{-1}],z]^y[[y,z^{-1}],x]^z[[z,x^{-1}],y]^x=1.
\end{displaymath}

\begin{lemma}\label{Lm:InGroupOfClass3}
Let $H$ be a group of nilpotency class at most three.
Then
\begin{displaymath}
    [x,[y,z]][y,[z,x]][z,[x,y]]=1,
\end{displaymath}
and $[x,[y,z]] =[x,[y^{-1},z^{-1}]]$ for every $x$, $y$, $z\in H$.
\end{lemma}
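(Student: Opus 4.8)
The plan is to extract both identities directly from the Hall--Witt identity by exploiting the nilpotency hypothesis to linearize it. First I would recall that in a group of nilpotency class at most three, every triple commutator $[a,[b,c]]$ lies in the third term of the lower central series, which is central, and hence such triple commutators are \emph{multilinear}: they are unaffected by right-multiplying any argument by a commutator, and they invert under inversion of any single argument. Concretely, since $[y^{-1},z^{-1}] = [y,z] \cdot w$ for some $w$ in the (central) third term, and since $[x,-]$ applied to a product splits modulo higher terms that vanish here, one gets $[x,[y^{-1},z^{-1}]] = [x,[y,z]]$; this is exactly the second claimed identity. The same centrality lets me replace the conjugations $(-)^y$, $(-)^z$, $(-)^x$ appearing in the Hall--Witt identity by the identity, because conjugating a central element does nothing.

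The main work is therefore to pass from the Hall--Witt identity to the cyclic Jacobi-type identity. Starting from
\begin{displaymath}
    [[x,y^{-1}],z]^y[[y,z^{-1}],x]^z[[z,x^{-1}],y]^x=1,
\end{displaymath}
I would first drop the conjugating superscripts, since each bracketed factor is a triple commutator and thus central in a class-three group; this yields
\begin{displaymath}
    [[x,y^{-1}],z]\,[[y,z^{-1}],x]\,[[z,x^{-1}],y]=1.
\end{displaymath}
Next I would use the second identity (just established) to remove the inverses on the inner arguments: in a class-three group $[[x,y^{-1}],z] = [[x,y],z]$ and similarly for the other two factors, because changing $y^{-1}$ to $y$ alters the inner commutator only by a central factor, which is then killed by the outer bracket. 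This reduces the relation to
\begin{displaymath}
    [[x,y],z]\,[[y,z],x]\,[[z,x],y]=1.
\end{displaymath}

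Finally I would convert the outer brackets $[[a,b],c]$ into the inner-bracket form $[c,[a,b]]$ demanded by the statement. Since each $[[a,b],c]$ is central, it equals $[c,[a,b]]^{-1}$; because all three factors commute (being central), inverting the whole product and relabeling gives $[z,[x,y]][x,[y,z]][y,[z,x]]=1$, which is the asserted Jacobi identity up to the (immaterial, since the factors commute) cyclic ordering of the three terms. The one point demanding a little care---and the step I expect to be the main obstacle---is the bookkeeping in these rewrites: one must consistently track that every correction term produced by an inversion or a conjugation genuinely lands in the central third term of the lower central series, so that it is annihilated by the surrounding commutator bracket. Once the centrality of all triple commutators is firmly in hand, each rewrite is forced and the two stated identities drop out.
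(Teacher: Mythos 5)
Your overall strategy coincides with the paper's: the paper gives no proof beyond remarking that the lemma is a consequence of the Hall--Witt identity, and your plan of using the centrality of $\gamma_3(H)$ (the third term of the lower central series, which is central since $H$ has class at most three) to strip the conjugations and the inversions from that identity is the right way to supply the missing details. Your treatment of the second identity is correct: $[y^{-1},z^{-1}]$ differs from $[y,z]$ by a central element of $\gamma_3(H)$ because the two inversions cancel each other, so the outer bracket does not see the difference.

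One intermediate step in your derivation of the Jacobi identity is, however, wrong as stated. You claim $[[x,y^{-1}],z]=[[x,y],z]$ on the grounds that ``changing $y^{-1}$ to $y$ alters the inner commutator only by a central factor.'' That is not so: $[x,y^{-1}]$ equals $[x,y]^{-1}$ times a central element of $\gamma_3(H)$, i.e.\ inverting a \emph{single} argument inverts the commutator modulo $\gamma_3(H)$; it is only when \emph{both} arguments are inverted, as in the second identity, that the inversions cancel. Hence $[[x,y^{-1}],z]=[[x,y],z]^{-1}$ --- which in fact agrees with the ``invert under inversion of any single argument'' rule you state correctly at the outset and then fail to apply here. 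The slip turns out to be harmless: it inverts all three factors of the Hall--Witt product simultaneously, and since these factors lie in the central subgroup $\gamma_3(H)$ and therefore commute, their product is trivial if and only if the product of their inverses is. So the identity $[[x,y],z][[y,z],x][[z,x],y]=1$, and with it the final conversion to $[x,[y,z]][y,[z,x]][z,[x,y]]=1$, still follows. Correct the justification of that one step and the proof is complete.
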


\begin{lemma}\label{Lm:Interchange12}
Assume that \eqref{Eq:C1}, \eqref{Eq:C2} hold. Then $\delta([z,x],y) =
\delta([x,z],y)^{-1}$.
\end{lemma}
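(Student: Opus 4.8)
The plan is to reduce the claim to a single statement about how $\delta$ behaves under inversion of its first argument, and then to read off that statement from the additivity of $\mu$ guaranteed by \eqref{Eq:C1} and \eqref{Eq:C2}. First I would note that since $G/N$ is abelian we have $G'\le N$, so both $[z,x]$ and $[x,z]$ lie in $N$; moreover, in any group $[x,z]=[z,x]^{-1}$. Writing $w=[z,x]\in N$, so that $w^{-1}=[x,z]$, the desired identity $\delta([z,x],y)=\delta([x,z],y)^{-1}$ becomes $\delta(w,y)=\delta(w^{-1},y)^{-1}$. Hence it suffices to prove $\delta(w^{-1},y)=\delta(w,y)^{-1}$ for an arbitrary $w\in N$.

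Next I would extract the two ``inverse'' relations for $\mu$ from the additivity hypotheses. Since $w\in N$, the triple $\{w,w^{-1},y\}$ meets $N$, so \eqref{Eq:C1} applies and gives $\mu(ww^{-1},y)=\mu(w,y)\mu(w^{-1},y)$; as $ww^{-1}=1$ and $\mu(1,y)=1$ by the normalization $\mu(K,yK)=1$, this yields $\mu(w^{-1},y)=\mu(w,y)^{-1}$. Symmetrically, \eqref{Eq:C2} applied to the triple $\{y,w,w^{-1}\}$ gives $\mu(y,ww^{-1})=\mu(y,w)\mu(y,w^{-1})$, and since $\mu(y,1)=1$ we obtain $\mu(y,w^{-1})=\mu(y,w)^{-1}$.

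Finally I would assemble these into $\delta$. By definition $\delta(w^{-1},y)=\mu(w^{-1},y)\mu(y,w^{-1})^{-1}=\mu(w,y)^{-1}\mu(y,w)$, whereas $\delta(w,y)^{-1}=\mu(y,w)\mu(w,y)^{-1}$. Because $\delta$ takes values in $Z\le Z(G)$, the factors $\mu(w,y)$ and $\mu(y,w)$ lie in the abelian group $Z$ and therefore commute, so the two expressions agree. This gives $\delta(w^{-1},y)=\delta(w,y)^{-1}$, which is the required identity.

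As for difficulty, there is essentially no obstacle: the whole argument is a short manipulation. The only points needing a moment's care are verifying that the hypotheses of \eqref{Eq:C1} and \eqref{Eq:C2} genuinely apply---that is, that $[z,x]\in N$ so that the relevant triples intersect $N$---and that the normalization of $\mu$ collapses the split products $\mu(ww^{-1},y)$ and $\mu(y,ww^{-1})$. Commutativity of $Z$ then closes the argument.
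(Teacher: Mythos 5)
Your proof is correct. Every step checks out: $[z,x]\in G'\le N$ because $G/N$ is abelian, so the triples $\{w,w^{-1},y\}$ and $\{y,w,w^{-1}\}$ do meet $N$ and \eqref{Eq:C1}, \eqref{Eq:C2} apply; the normalization $\mu(xK,K)=\mu(K,xK)=1$ collapses $\mu(1,y)$ and $\mu(y,1)$; and commutativity of $Z\le Z(G)$ finishes the computation. Your route differs from the paper's. You use the group identity $[x,z]=[z,x]^{-1}$ to reduce the claim to the statement that $\delta(\cdot,y)$ inverts inverses on $N$, which you get from $\mu(w^{-1},y)=\mu(w,y)^{-1}$ and $\mu(y,w^{-1})=\mu(y,w)^{-1}$. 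The paper instead writes $zx=xz[z,x]$ and expands $\mu(zx,y)=\mu(xz,y)\mu([z,x],y)$ (and symmetrically in the second slot), obtaining
\begin{displaymath}
\delta([z,x],y)=\mu(zx,y)\mu(xz,y)^{-1}\mu(y,zx)^{-1}\mu(y,xz),
\end{displaymath}
an expression that visibly inverts when $x$ and $z$ are interchanged. Both arguments rest on the same mechanism, namely additivity of $\mu$ in an argument lying in $N$; yours is slightly more modular in that it isolates a reusable inversion property of $\mu$ on $N$, while the paper's yields an explicit formula for $\delta([z,x],y)$ in terms of values of $\mu$ on products $zx$, $xz$, which is occasionally convenient elsewhere. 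Either is acceptable.
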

\begin{proof}
First note that $\mu(zx,y)=\mu(xz[z,x],y) = \mu(xz,y)\mu([z,x],y)$. This means
that $\mu([z,x],y) = \mu(zx,y)\mu(xz,y)^{-1}$. Similarly, $\mu(y,[z,x]) =
\mu(y,zx)\mu(y,xz)^{-1}$. Hence $\delta([z,x],y) =
\mu([z,x],y)\mu(y,[z,x])^{-1} =
\mu(zx,y)\mu(xz,y)^{-1}\mu(y,zx)^{-1}\mu(y,xz)$. The equality $\delta([z,x],y)
= \delta([x,z],y)^{-1}$ follows.
\end{proof}

\begin{proposition}\label{Pr:LoopClass2} Assume that
\eqref{Eq:C1}--\eqref{Eq:C3} hold. Then the following conditions are
equivalent:
\begin{enumerate}
\item[(i)] $Q$ is of nilpotency class at most two,
\item[(ii)] $G'\le Z(Q)$,
\item[(iii)] $\delta([x,y],z)\delta([y,z],x)\delta([z,x],y)=1$.
\end{enumerate}
\end{proposition}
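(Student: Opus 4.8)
The plan is to prove the two equivalences (i) $\Leftrightarrow$ (ii) and (ii) $\Leftrightarrow$ (iii) separately, disposing of the first by a pure quotient argument and reserving the genuine computation for the second. For (i) $\Leftrightarrow$ (ii), the point I would stress is that $Q/Z(Q)$ is automatically a \emph{group}. Indeed, by Lemma \ref{Lm:ZInCenters} we have $Z\le Z(Q)$ and $Q/Z\cong G/Z$ is a group, so $Z(Q)/Z$ is a normal subgroup of $Q/Z$ and the further quotient $Q/Z(Q)\cong (Q/Z)/(Z(Q)/Z)$ is a group. By the definition of nilpotency class, $Q$ has class at most two precisely when $Q/Z(Q)$ is an abelian group, i.e.\ when $(Q/Z)'\le Z(Q)/Z$. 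Under the isomorphism $Q/Z\cong G/Z$ the derived subgroup $(Q/Z)'$ corresponds to $G'Z/Z$, so the condition reads $G'Z\le Z(Q)$, which—since $Z\le Z(Q)$—is exactly (ii). No explicit computation is needed here.

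For (ii) $\Leftrightarrow$ (iii) I would first reduce (ii) to a statement about single commutators. Since $G'\le N\le N(Q)$ by Lemma \ref{Lm:TLRCommute}, every commutator $[x,y]$ already lies in $N(Q)$, so $[x,y]\in Z(Q)$ if and only if it commutes in $Q$ with each $z$. Expanding $z*[x,y]=[x,y]*z$ via \eqref{Eq:Q} shows this is equivalent to $\delta([x,y],z)=[z,[x,y]]$. Moreover, once every commutator lies in $Z(Q)$, all of $G'$ does: for $u,v\in G'\cap Z(Q)$ one checks $uv=(u*v)*\mu(u,v)^{-1}$, a $*$-product of elements of $Z(Q)$ (here $\mu(u,v)^{-1}\in Z\le Z(Q)$ contributes trivially because $\mu$ vanishes on $K$), so $G'\cap Z(Q)$ is closed under the group multiplication and therefore contains all of $G'$. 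Hence (ii) is equivalent to the pointwise identity
\begin{equation*}
    \delta([x,y],z)=[z,[x,y]]\quad\text{for all $x,y,z$}.\tag{$\ast$}
\end{equation*}

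It then remains to prove $(\ast)\Leftrightarrow\text{(iii)}$. The forward direction is immediate: substituting $(\ast)$ into the left-hand side of (iii) gives $[z,[x,y]][x,[y,z]][y,[z,x]]$, which equals $1$ by the Jacobi-type identity of Lemma \ref{Lm:InGroupOfClass3}, valid because $G$ has class at most three (Proposition \ref{Pr:ClassAtMost3}) and the three factors lie in $\gamma_3(G)\le Z$ and so commute. For the converse I would start from \eqref{Eq:Aux1}, which together with $[z,[y^{-1},x^{-1}]]=[z,[y,x]]$ from Lemma \ref{Lm:InGroupOfClass3} yields $[z,[x,y]]=\delta([z,y],x)\delta([z,x],y)^{-1}$; this rewrites $(\ast)$ as the purely $\delta$-theoretic claim $\delta([x,y],z)=\delta([z,y],x)\delta([z,x],y)^{-1}$. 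Solving (iii) for $\delta([x,y],z)$ and applying the interchange rule $\delta([a,b],c)=\delta([b,a],c)^{-1}$ of Lemma \ref{Lm:Interchange12} to the surviving factors produces exactly this expression, and every step is reversible, giving $(\ast)\Leftrightarrow\text{(iii)}$ and hence (ii) $\Leftrightarrow$ (iii).

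The main obstacle is the bookkeeping in this last paragraph: aligning the cyclic substitutions in (iii) and the inner-commutator antisymmetry with the form of \eqref{Eq:Aux1}. The whole converse rests essentially on \eqref{Eq:C3} (through \eqref{Eq:Aux1}) and on $G$ being of class at most three, so care is needed to keep track of which hypotheses each rearrangement uses; the algebra itself, once $(\ast)$ is isolated, is routine because all quantities involved are central.
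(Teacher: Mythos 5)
Your proof is correct and follows essentially the same route as the paper's: the same quotient argument for (i)$\Leftrightarrow$(ii), and the same reduction of (ii) to the pointwise identity $\delta([x,y],z)=[z,[x,y]]$, converted to (iii) via \eqref{Eq:Aux1}, Lemma \ref{Lm:InGroupOfClass3} and Lemma \ref{Lm:Interchange12}. The separate forward implication via the Jacobi-type identity is a harmless redundancy, since your chain of equivalences already runs in both directions.
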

\begin{proof}
By Lemma \ref{Lm:ZInCenters}, $Z\le Z(Q)$ and $Q/Z\cong G/Z$. This means that
$Z(Q)$ is not only a subgroup of $Q$ but also a subgroup of $G$, and
$Q/Z(Q)\cong G/Z(Q)$. Now, $Q$ is of nilpotency class at most two if and only
if $Q/Z(Q)$ is abelian, which is the same as $G/Z(Q)$ being abelian, which is
equivalent to $G'\le Z(Q)$.

By Lemma \ref{Lm:TLRCommute}, $G'\le N\le N(Q)$. Thus the condition $G'\le
Z(Q)$ holds if and only if $z*[y,x]=[y,x]*z$ for every $x$, $y$, $z\in G$. This
is the same as $z[y,x]\mu(z,[y,x]) = [y,x]z\mu([y,x],z)$, or, equivalently,
\begin{displaymath}
    [z,[y^{-1},x^{-1}]] = [z,[y,x]] = \delta([y,x],z) = \delta([x,y],z)^{-1},
\end{displaymath}
by Lemmas \ref{Lm:InGroupOfClass3} and \ref{Lm:Interchange12}. Then
\eqref{Eq:Aux1} and Lemma \ref{Lm:Interchange12} yield
\begin{displaymath}
    \delta([x,y],z)^{-1} =
    [z,[y^{-1},x^{-1}]] = \delta([z,x],y)\delta([z,y],x)^{-1}
    = \delta([z,x],y)\delta([y,z],x),
\end{displaymath}
and we are done.
\end{proof}

In particular, if $G$ is abelian then $Q$ cannot be of nilpotency class three.

\begin{corollary}\label{Cr:23}
Assume that \eqref{Eq:C1}--\eqref{Eq:C3} hold. Then $Q$ is of nilpotency class
three and $G$ is of nilpotency class two if and only if $\delta([x,y],z) =
\delta([x,z],y)$ for every $x$, $y$, $z\in G$, and $\delta([x,y],z)\ne 1$ for
some $x$, $y$, $z\in G$.
\end{corollary}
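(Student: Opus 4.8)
The plan is to recognize that each of the two stated conditions matches a criterion already established earlier, so that the only genuinely new ingredient is an algebraic simplification of the Jacobi-type product appearing in Proposition~\ref{Pr:LoopClass2}. First I observe that the condition $\delta([x,y],z)=\delta([x,z],y)$ is nothing but condition~(iii) of Corollary~\ref{Cr:GroupClass2} after relabeling the free variables: Corollary~\ref{Cr:GroupClass2}(iii) reads $\delta([z,x],y)=\delta([z,y],x)$, which upon renaming $(z,x,y)\mapsto(x,y,z)$ becomes exactly $\delta([x,y],z)=\delta([x,z],y)$. Hence this first condition holds if and only if $G$ is of nilpotency class at most two.

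The crucial step is the following collapse. Writing $f(a,b,c)=\delta([a,b],c)$, Corollary~\ref{Cr:GroupClass2} (in the form just noted) says that when $G$ has class at most two, $f$ is symmetric in its last two arguments, $f(a,b,c)=f(a,c,b)$, while Lemma~\ref{Lm:Interchange12} says $f$ is antisymmetric in its first two arguments, $f(a,b,c)=f(b,a,c)^{-1}$. Combining these, $f(y,z,x)=f(y,x,z)=f(x,y,z)^{-1}$ and $f(z,x,y)=f(z,y,x)=f(y,z,x)^{-1}=f(x,y,z)$, so the product in Proposition~\ref{Pr:LoopClass2}(iii) simplifies to
\[
    \delta([x,y],z)\,\delta([y,z],x)\,\delta([z,x],y)=\delta([x,y],z).
\]
Thus, once $G$ has class at most two, Proposition~\ref{Pr:LoopClass2} shows that $Q$ has class at most two if and only if $\delta([x,y],z)=1$ for all $x$, $y$, $z$.

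With these two facts the corollary follows by assembling earlier results. For the forward direction, if $G$ has class two and $Q$ has class three, then $G$ of class (at most) two yields the symmetry $\delta([x,y],z)=\delta([x,z],y)$, while $Q$ not of class at most two forces the collapsed product to be nontrivial for some triple, that is, $\delta([x,y],z)\ne1$ for some $x$, $y$, $z$. Conversely, assume $\delta([x,y],z)=\delta([x,z],y)$ for all triples and $\delta([x,y],z)\ne1$ for some triple. The first assumption gives $G$ of class at most two via Corollary~\ref{Cr:GroupClass2}; since $G$ abelian would force $[x,y]=1$ and hence $\delta([x,y],z)=\delta(1,z)=1$ for all triples (using $\mu(K,zK)=1=\mu(zK,K)$), the nonvanishing assumption rules this out, so $G$ has class exactly two. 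By the collapse, the nonvanishing also makes the product of Proposition~\ref{Pr:LoopClass2}(iii) nontrivial, so $Q$ is not of class at most two; as $Q$ has class at most three by Proposition~\ref{Pr:ClassAtMost3}, it has class exactly three.

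I expect the index bookkeeping in the collapse to be the only delicate point: one must apply the symmetry and the antisymmetry in the correct slots and verify that the three cyclic terms reduce to $\delta([x,y],z)$, $\delta([x,y],z)^{-1}$, $\delta([x,y],z)$, respectively. Everything else is a matter of correctly pairing the two hypotheses with conditions~(iii) of Corollary~\ref{Cr:GroupClass2} and Proposition~\ref{Pr:LoopClass2}, together with the class bound from Proposition~\ref{Pr:ClassAtMost3}.
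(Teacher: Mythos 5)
Your proof is correct and follows essentially the same route as the paper's: identify the symmetry hypothesis with Corollary~\ref{Cr:GroupClass2}(iii), use Lemma~\ref{Lm:Interchange12} to collapse the product in Proposition~\ref{Pr:LoopClass2}(iii) to the single factor $\delta([x,y],z)$ (this is exactly the paper's equation~\eqref{Eq:Cancel}), and conclude via Proposition~\ref{Pr:LoopClass2}. Your extra care in ruling out the abelian case and invoking Proposition~\ref{Pr:ClassAtMost3} to pin the class of $Q$ at exactly three only makes explicit what the paper leaves implicit.
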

\begin{proof}
By Corollary \ref{Cr:GroupClass2}, $\delta([x,y],z) = \delta([x,z],y)$ holds
for every $x$, $y$, $z\in G$ if and only if $G$ is of nilpotency class two, in
which case Lemma \ref{Lm:Interchange12} yields
\begin{equation}\label{Eq:Cancel}
    1 = \delta([y,z],x)\delta([y,z],x)^{-1}
    = \delta([y,z],x)\delta([z,y],x)
    = \delta([y,z],x)\delta([z,x],y).
\end{equation}
Using \eqref{Eq:Cancel} and Proposition \ref{Pr:LoopClass2}, $Q$ is then of
nilpotency class three if and only if there are $x$, $y$, $z\in G$ such that
$1\ne \delta([x,y],z)\delta([y,z],x)\delta([z,x],y) = \delta([x,y],z)$.
\end{proof}

When $A$, $B$ are groups then $f:A^3\to B$ is said to be \emph{symmetric
triadditive} if $f(a_1,a_2,a_3) = f(a_{\sigma(1)},a_{\sigma(2)},a_{\sigma(3)})$
for every permutation of $\sigma$ of $\{1,2,3\}$ and every $a_1$, $a_2$,
$a_3\in A$, and $f(ab,c,d) = f(a,c,d)f(b,c,d)$ for every $a$, $b$, $c$, $d\in
A$.

\begin{proposition}\label{Pr:SymmForm}
Assume that \eqref{Eq:C1}--\eqref{Eq:C3} holds, $G$ is of nilpotency class two
and $Q$ is of nilpotency class three. Then there exists a subgroup $A \le Z$ of
exponent two and a nontrivial symmetric triadditive mapping $f:(G/N)^3 \to A$
such that
\begin{displaymath}
    \delta([x,y],z) = f(xN,yN,zN)
\end{displaymath}
for all $x$, $y$, $z\in G$.
\end{proposition}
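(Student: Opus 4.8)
The goal is to package the map $\delta([x,y],z)$ into a symmetric triadditive form, so the plan is to show that the three variables enter $\delta([x,y],z)$ symmetrically, that the map is additive in each slot, that it takes values in an elementary abelian $2$-group, and that it factors through $G/N$ in all three arguments. First I would define $f(xN,yN,zN) = \delta([x,y],z)$ and verify this is well defined: by hypothesis \eqref{Eq:C3} together with $G$ being of nilpotency class two we are in the situation of Corollary \ref{Cr:23}, which gives $\delta([x,y],z) = \delta([x,z],y)$. Combining this with Lemma \ref{Lm:Interchange12} (which yields $\delta([z,x],y) = \delta([x,z],y)^{-1}$) should let me swap any two of the three arguments at the cost of controlled signs; the key point will be that all these values lie in the exponent-two subgroup $A$ I construct below, so the signs disappear and full symmetry under all of $S_3$ follows.

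Next I would address \emph{additivity}. Conditions \eqref{Eq:C1} and \eqref{Eq:C2} make $\mu$ (and hence $\delta$) additive in each variable whenever one of the arguments lands in $N$; since commutators $[x,y]$ lie in $G' \le N$, the map $\delta([x,y],z)$ is additive in the slot carrying the commutator. Additivity in the remaining slots then follows from the commutator identity $[xx',y] = [x,y]^{x'}[x',y]$ in a class-two group (where the conjugation is absorbed because $G' \le N$ and $\delta$ is insensitive to $N$-cosets in its first argument), combined with the established symmetry to move additivity from the first slot to the others. This is where I expect the bookkeeping to be heaviest, but no genuinely new idea is needed beyond \eqref{Eq:C1}, \eqref{Eq:C2}, and the class-two commutator calculus.

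For the \emph{exponent-two} claim, I would let $A$ be the subgroup of $Z$ generated by all values $\delta([x,y],z)$. Additivity in the commutator slot gives $\delta([x,y]^2,z) = \delta([x,y],z)^2$, and I would use \eqref{Eq:Aux1} (which reads $[z,[y^{-1},x^{-1}]] = \delta([z,x],y)\delta([z,y],x)^{-1}$) in the class-two case, where $z^{xy}=z^{yx}$ forces the left-hand commutator to be trivial; chasing this through should show that each generator squares to $1$, so $A$ has exponent two. Finally, since $\delta$ already factors through $G/K$ and each value is insensitive to $N$-cosets by \eqref{Eq:C1}--\eqref{Eq:C2}, the form descends to $(G/N)^3 \to A$; nontriviality is exactly the condition $\delta([x,y],z)\ne 1$ for some $x,y,z$ supplied by Corollary \ref{Cr:23}. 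The main obstacle will be organizing the symmetry and additivity arguments so that the $N$-coset insensitivity and the exponent-two property are invoked in the right order—once $A$ is known to have exponent two, the inverse signs from Lemma \ref{Lm:Interchange12} become harmless and everything collapses into a genuinely symmetric, additive form.
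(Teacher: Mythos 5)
Your overall architecture matches the paper's: define $f(x,y,z)=\delta([x,y],z)$, get the $(2,3)$-symmetry from Corollary \ref{Cr:GroupClass2}/\ref{Cr:23}, get additivity from \eqref{Eq:C1}--\eqref{Eq:C2} together with the commutator expansion of $[xx',y]$, observe that values vanish when an argument lies in $N$, and quote Corollary \ref{Cr:23} for nontriviality. The genuine gap is in the exponent-two step, which is load-bearing for you because you defer full $S_3$-symmetry to it. Neither of the two mechanisms you propose delivers $\delta([x,y],z)^2=1$. The identity $\delta([x,y]^2,z)=\delta([x,y],z)^2$ only says that the square of a value is another value; since nothing in the hypotheses forces $[x,y]^2$ (or $x^2$) into $K$, this is a dead end. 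And specializing \eqref{Eq:Aux1} to the class-two case, where $[z,[y^{-1},x^{-1}]]=1$, yields only $\delta([z,x],y)=\delta([z,y],x)$ --- that is exactly the $(2,3)$-symmetry of Corollary \ref{Cr:GroupClass2} that you already have, and it says nothing about squares. The correct derivation is available from ingredients you already put on the table: combine the symmetry $f(x,y,z)=f(x,z,y)$ with the antisymmetry $f(x,y,z)=f(y,x,z)^{-1}$ of Lemma \ref{Lm:Interchange12} and chase the identity permutation around, e.g.
\begin{displaymath}
f(x,y,z)=f(y,x,z)^{-1}=f(y,z,x)^{-1}=f(z,y,x)=f(z,x,y)=f(x,z,y)^{-1}=f(x,y,z)^{-1},
\end{displaymath}
since three applications of the inverting transposition occur. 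The paper reaches the same conclusion by feeding the Hall--Witt consequence of Lemma \ref{Lm:InGroupOfClass3} through \eqref{Eq:Aux1}, Lemma \ref{Lm:Interchange12} and \eqref{Eq:Cancel}; either way, once $f^2=1$ is in hand your plan of killing the signs and concluding full symmetry goes through (the paper avoids the issue entirely by getting cyclic invariance directly from \eqref{Eq:Cancel}, with no appeal to exponent two).

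Two smaller imprecisions worth fixing. First, $\delta$ is defined on $G/K\times G/K$, so it is insensitive to $K$-cosets, not $N$-cosets; the absorption of the conjugate in $[xx',y]=[x,y]^{x'}[x',y]$ holds because $G'\le Z(G)$ in the class-two case (or, as the paper argues without class two, because $[[x,y],x']\in K$ since $N/K\le Z(G/K)$). Second, the descent to $(G/N)^3$ is not a formal consequence of ``insensitivity''; the crux is that $[x,n]\in K$ for $n\in N$ (again from $N/K\le Z(G/K)$), whence $\delta([x,n],z)=1$ and additivity gives $\delta([x,yn],z)=\delta([x,y],z)$. With these points made explicit your proof is complete and coincides with the paper's.
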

\begin{proof}
Let $f:G^3\to Z$ be defined by $f(x,y,z) = \delta([x,y],z)$. By Corollary
\ref{Cr:23}, $f$ is nontrivial, and $\delta([x,y],z) = \delta([x,z],y)$, so
$f(x,y,z)$ is invariant under the permutation $(2,3)$ of its arguments. By
\eqref{Eq:Cancel},
\begin{displaymath}
    f(x,y,z) = \delta([x,y],z) = \delta([x,y],z)\delta([y,z],x)\delta([z,x],y),
\end{displaymath}
which shows that $f(x,y,z)$ is invariant under the permutation $(1,2,3)$ of its
arguments. Altogether, $f$ is symmetric.

By Lemma \ref{Lm:InGroupOfClass3},
$1=[x,[z^{-1},y^{-1}]][z,[y^{-1},x^{-1}]][y,[x^{-1},z^{-1}]]$. By
\eqref{Eq:Aux1}, we can rewrite this as
\begin{displaymath}
    1=\delta([x,y],z)\delta([x,z],y)^{-1}
    \delta([z,x],y)\delta([z,y],x)^{-1}
    \delta([y,z],x)\delta([y,x],z)^{-1},
\end{displaymath}
and by Lemma \ref{Lm:Interchange12} and \eqref{Eq:Cancel} we can further
simplify it to
\begin{displaymath}
    1 = (\delta([x,y],z)\delta([y,z],x)\delta([z,x],y))^2 = \delta([x,y],z)^2.
\end{displaymath}
This guarantees the existence of the subgroup $A\le Z$ of exponent two.

In any group, $[xy,z] = [x,z][[x,z],y][y,z]$. Since $[x,z]$, $[y,z]\in N$ and
$[[x,z],y]\in K$, we have $\mu([xy,z],u) = \mu([x,z],u)\mu([y,z],u)$, and
$\mu(u,[xy,z]) = \mu(u,[x,z])\mu(u,[y,z])$. Then $\delta([xy,z],u) =
\delta([x,z],u)\delta([y,z],u)$, and $f$ is triadditive.

Let $n\in N$. Then $[x,n]\in K$ and $\delta([x,n],z)=1$. Thus, by additivity,
$\delta([x,yn],z) = \delta([x,y],z)\delta([x,n],z) = \delta([x,y],z)$ and that
is why the value $f(x,y,z)$ depends only upon classes modulo $N$.
\end{proof}

\section{Constructing loops from symmetric trilinear alternating forms}\label{Sc:NilpClassTwo}

As we have just shown, if $Q$ is a loop of nilpotency class three with
commuting inner mappings obtained as a modification of a group $G$ of
nilpotency class two by $\mu$, then $\delta$ gives rise to a nontrivial
symmetric triadditive form.

We now show a partial converse. Namely, that it is possible to construct $G$
and $\mu$ (and hence $Q$) with the desired properties from certain groups of
nilpotency class two.

Throughout this section, let $H$ be a group of nilpotency class two such that
$H'=Z(H)$, $H/H'$ is an elementary abelian 2-group with basis
$\{e_1H',\dots,e_dH'\}$, and $H'$ is an elementary abelian 2-group with basis
$\{[e_i,e_j];\;1\le i<j\le d\}$. In addition, let $A=\{1,-1\}$, and let
$f:(H/H')^3\to A$ be a symmetric trilinear alternating form (we can view $H/H'$
as a vector space over $A$). For $u$, $v$, $w\in H$, we write $f(u,v,w)$
instead of the formally more precise $f(uH',vH',wH')$.

Starting with $f$, we are going to construct $\delta:H\times H\to A$ and
$\mu:H\times H\to A$ so that $f(u,v,w) = \delta([u,v],w)$, $\delta(u,v) =
\mu(u,v)\mu(v,u)^{-1}$, and such that \eqref{Eq:C1}--\eqref{Eq:C3} hold for
$\mu$ and $\delta$, with $H'$ in place of $N$.

We can then set $G=A\times H$ (any extension of $A$ by $H$ would do) and use
the mappings $\mu$, $\delta:H\times H\to A$ to obtain the loop $Q=(G,*)$
according to \eqref{Eq:Q}. By Proposition \ref{Pr:InnAbelian} and Corollary
\ref{Cr:23}, $Q$ is then a loop of nilpotency class three with commuting inner
mappings, \emph{provided that $f$ is nontrivial}.

Let $M=H'$. The construction of $\delta$ and $\mu$ is in three steps. First,
the condition $f(u,v,w)=\delta([u,v],w)$ forces $\delta$ on $M\times H$.
Second, the extension of $\delta$ from $M\times H$ to $H\times H$ depends on
certain free parameters. Third, once $\delta:H\times H\to A$ is given,
additional free parameters are needed to obtain $\mu$.

\subsection{Constructing $\delta$}

For $1\le i$, $j$, $k\le d$ and $m\in M$ let
\begin{equation}\label{Eq:DeltaMH}
    \delta([e_i,e_j],e_km) = f(e_i,e_j,e_k),
\end{equation}
and extend $\delta$ linearly into a mapping $\delta:M\times H\to A$. Then
$\delta$ satisfies $\delta(m_1m_2,h) = \delta(m_1,h)\delta(m_2,h)$ and
$\delta(m,h_1h_2) = \delta(m,h_1)\delta(m,h_2)$ for every $m$, $m_1$, $m_2\in
M$ and $h$, $h_1$, $h_2\in H$. Also, $\delta(M,M)=1$, because $f$ vanishes
whenever one of its arguments is trivial.

Our present task is to construct $\delta:H\times H\to A$ such that
\begin{align}
    &\delta([u,v],w) = f(u,v,w)\text{ for every $u$, $v$, $w\in H$},\notag\\
    &\delta(u,v) = \delta(v,u)^{-1}\text{ for every $u$, $v\in H$},\label{Eq:WantDelta}\\
    &\delta(u,vw) = \delta(u,v)\delta(u,w)\text{ if $\{u,v,w\}\cap
    M\ne\emptyset$.}\notag
\end{align}
(If $\{u,v,w\}\cap M\ne\emptyset$, we then also have $\delta(uv,w) =
\delta(w,uv)^{-1} = \delta(w,u)^{-1}\delta(w,v)^{-1} =
\delta(u,w)\delta(v,w)$.)

\begin{lemma} $\delta:M\times H\to A$ satisfies
$\delta([u,v],w) = f(u,v,w)$ for every $u$, $v$, $w\in H$.
\end{lemma}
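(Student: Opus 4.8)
The plan is to verify $\delta([u,v],w)=f(u,v,w)$ for \emph{all} $u$, $v$, $w\in H$, given that $\delta$ was defined on $M\times H$ only via \eqref{Eq:DeltaMH} on the generators $[e_i,e_j]$ and then extended linearly in the first variable over $M$. The point is that the commutator $[u,v]$ always lands in $M=H'$ (since $H$ has class two, $H'=Z(H)$), so $\delta([u,v],w)$ is already defined; I must show its value matches the trilinear form $f$.

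First I would write $u$ and $v$ in terms of the basis modulo $H'$: since $H/H'$ is elementary abelian with basis $\{e_iH'\}$, we have $uH'=\prod_i e_i^{a_i}H'$ and $vH'=\prod_j e_j^{b_j}H'$ for suitable exponents $a_i$, $b_j\in\{0,1\}$. Because $H$ has class two, the commutator is bilinear in the sense that $[u,v]$ equals $\prod_{i,j}[e_i,e_j]^{a_ib_j}$ modulo the relations in $H'$; more precisely, using $[xy,z]=[x,z][y,z]$ and $[x,yz]=[x,y][x,z]$ (valid since commutators are central), one expands $[u,v]$ as a product of the generators $[e_i,e_j]$ with the appropriate multiplicities. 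The key structural input is that $H'$ is elementary abelian with basis exactly $\{[e_i,e_j]:i<j\}$, so this expansion is unambiguous.

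Next, I would apply the additivity of $\delta$ in its first ($M$-)argument, established just before the lemma, to push $\delta$ through this product:
\begin{displaymath}
    \delta([u,v],w)=\prod_{i,j}\delta([e_i,e_j],w)^{a_ib_j}=\prod_{i,j}f(e_i,e_j,w)^{a_ib_j},
\end{displaymath}
using \eqref{Eq:DeltaMH} (noting $w$ can be written as $e_k m$ for the basis expansion in its coset, and $\delta([e_i,e_j],e_km)=f(e_i,e_j,e_k)$ is independent of $m\in M$). On the other side, the trilinearity of $f$ gives $f(u,v,w)=\prod_{i,j}f(e_i,e_j,w)^{a_ib_j}$ directly, since $f$ factors through $H/H'$ and is additive in each slot. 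Comparing the two products yields the desired equality.

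The main obstacle — really the only subtle point — is bookkeeping around the diagonal and antisymmetry. The generators are indexed by $i<j$, whereas the commutator expansion produces terms $[e_i,e_j]$ for all ordered pairs, including $i=j$ (where $[e_i,e_i]=1$) and $i>j$ (where $[e_j,e_i]=[e_i,e_j]^{-1}$). I must check that these conventions are consistent on both sides: $f$ is \emph{alternating}, so $f(e_i,e_i,w)=1$ and $f(e_j,e_i,w)=f(e_i,e_j,w)^{-1}$, matching exactly the antisymmetry of the commutator, and since $A=\{1,-1\}$ has exponent two the signs cause no trouble. Once the ordered-versus-unordered indexing is reconciled, the two expansions agree term by term, completing the proof.
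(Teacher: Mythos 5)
Your proposal is correct and follows essentially the same route as the paper's proof: expand $[u,v]$ over the basis using the bilinearity of the commutator in a class-two group, expand $f(u,v,w)$ using trilinearity and the alternating property, and match the two term by term via the defining formula \eqref{Eq:DeltaMH} and the additivity of $\delta$. The diagonal/antisymmetry bookkeeping you flag is exactly what the paper handles by restricting the product to $i\ne j$ and invoking that $f$ is alternating.
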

\begin{proof}
Let $u\in e_1^{u_1}\cdots e_d^{u_d}M$, and $v\in e_1^{v_1}\cdots e_d^{v_d}M$,
where $0\le u_i$, $v_i\le 1$. Since $H$ is of nilpotency class $2$, we have
$[xy,z]=[x,z][y,z]$ and $[x,yz] = [x,y][x,z]$ for every $x$, $y$, $z\in H$.
Thus
\begin{displaymath}
    \delta([u,v],w) = \prod_{i\ne j}\delta([e_i^{u_i},e_j^{v_j}],w),
\end{displaymath}
and, since $f$ is trilinear and alternating,
\begin{displaymath}
    f(u,v,w) = \prod_{i\ne j}f(e_i^{u_i},e_j^{v_j},w).
\end{displaymath}
It therefore suffices to show that $\delta([e_i^{u_i},e_j^{v_j}],w) =
f(e_i^{u_i},e_j^{v_j},w)$. This is clearly true when $u_i=0$ or $v_j=0$, and
when $u_i=v_j=1$, it follows from the definition \eqref{Eq:DeltaMH} of
$\delta$.
\end{proof}

We now extend $\delta:M\times H\to A$ to $\delta:H\times H\to A$. Let $T$ be a
transversal for $M$ in $H$, $T=\{t_1$, $\dots$, $t_k\}$, $t_1=1$. For $1\le i$,
$j\le k$, choose $\delta(t_i,t_j)\in A$ as follows:
\begin{align}
    &\delta(t_1,t_j) = 1 \text{ for every $1\le j\le k$},\notag\\
    &\delta(t_i,t_j) \text{ arbitrary when $1<i<j\le k$},\notag\\
    &\delta(t_j,t_i)=\delta(t_i,t_j)^{-1} \text{ when $1<i<j\le k$},\label{Eq:DeltaParams}\\
    &\delta(t_i,t_i)=1 \text{ for every $1\le i\le k$.}\notag
\end{align}

Every element $h\in H$ can be written uniquely as $h=mt$ for some $m\in M$,
$t\in T$, and we define $\delta:H\times H\to A$ by
\begin{equation}\label{Eq:DeltaMM}
    \delta(mt,m't') = \delta(m,t')\delta(m',t)^{-1}\delta(t,t'),
\end{equation}
where $\delta(m,t')$, $\delta(m',t)$ have already been defined above.

Note that the new definition \eqref{Eq:DeltaMM} gives $\delta(m,m't') =
\delta(m,t')\delta(m',1)\delta(t_1,t') = \delta(m,t')$, while the old
definition \eqref{Eq:DeltaMH} gives $\delta(m,m't') = \delta(m,m')\delta(m,t')
= \delta(m,t')$. Hence $\delta:H\times H\to A$ extends the map $\delta:M\times
H\to A$.

\begin{lemma}\label{Lm:delta2}
Let $\delta:H\times H\to A$ be defined as above. Then:
\begin{enumerate}
\item[(i)] $\delta(u,v) = \delta(v,u)^{-1}$ for every $u$, $v\in H$,
\item[(ii)] $\delta(u,mv) = \delta(u,m)\delta(u,v)$ for every $u$, $v\in H$,
$m\in M$,
\item[(iii)] $\delta(u,vm) = \delta(u,v)\delta(u,m)$ for every $u$, $v\in H$,
$m\in M$,
\item[(iv)] $\delta(m,uv) = \delta(m,u)\delta(m,v)$ for every $u$, $v\in H$,
$m\in M$.
\end{enumerate}
\end{lemma}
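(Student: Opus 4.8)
The plan is to verify each of the four identities by unfolding the explicit definition \eqref{Eq:DeltaMM}. Throughout I write each element of $H$ in its unique normal form relative to the transversal $T$: set $u=m_ut_u$, $v=m_vt_v$ with $m_u,m_v\in M$ and $t_u,t_v\in T$. The two facts I will lean on repeatedly are that $M=H'=Z(H)$ is central (so multiplying by an element of $M$ does not disturb the $T$-part of a normal form), and that the map $\delta\colon M\times H\to A$ from which everything is built is biadditive with $\delta(M,M)=1$. I will also use freely that $A$ is abelian of exponent two, so factors may be reordered and inverses dropped at will.

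For (i) I would expand both sides through \eqref{Eq:DeltaMM}:
\[\delta(u,v)=\delta(m_u,t_v)\delta(m_v,t_u)^{-1}\delta(t_u,t_v),\qquad \delta(v,u)=\delta(m_v,t_u)\delta(m_u,t_v)^{-1}\delta(t_v,t_u).\]
Inverting the second expression and comparing, the mixed factors $\delta(m_u,t_v)$ and $\delta(m_v,t_u)$ match automatically, so the identity collapses to $\delta(t_u,t_v)=\delta(t_v,t_u)^{-1}$. This is exactly the antisymmetry of $\delta$ on $T\times T$ recorded in \eqref{Eq:DeltaParams} (together with $\delta(t_1,t_j)=\delta(t_j,t_1)=1$ and $\delta(t_i,t_i)=1$), so (i) follows.

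Identity (iv) should come essentially for free: the extension \eqref{Eq:DeltaMM} was arranged to agree with the original $\delta\colon M\times H\to A$ whenever the left argument lies in $M$, and that original map is additive in its second argument; hence $\delta(m,uv)=\delta(m,u)\delta(m,v)$ is just the biadditivity already established on $M\times H$. For (ii) I would again expand via \eqref{Eq:DeltaMM}, using that the normal form of $mv$ is $(mm_v)t_v$ (centrality of $M$ is what keeps the $T$-part equal to $t_v$). Biadditivity of the original $\delta$ in its first argument splits $\delta(mm_v,t_u)=\delta(m,t_u)\delta(m_v,t_u)$, while the definition gives $\delta(u,m)=\delta(m,t_u)^{-1}$ once one checks the boundary values $\delta(m_u,t_1)=1$ and $\delta(t_u,t_1)=1$. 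Reassembling in the abelian group $A$ yields $\delta(u,mv)=\delta(u,m)\delta(u,v)$. Finally (iii) is immediate from (ii): since $m\in M=Z(H)$ we have $vm=mv$, whence $\delta(u,vm)=\delta(u,mv)=\delta(u,m)\delta(u,v)=\delta(u,v)\delta(u,m)$.

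The only genuinely fiddly part is (ii): one must track how the $M$-component and the $T$-component of each argument feed separately into the three factors of \eqref{Eq:DeltaMM}, and dispose of the degenerate values coming from $t_1=1$. The conceptual point that makes everything go through is that centrality of $M$ forces $mv$ and $v$ to share the transversal representative $t_v$, so passing from $\delta(u,v)$ to $\delta(u,mv)$ alters exactly one factor, namely $\delta(m_v,t_u)^{-1}$ becoming $\delta(mm_v,t_u)^{-1}$; that alteration contributes precisely the extra $\delta(m,t_u)^{-1}=\delta(u,m)$ demanded by the identity.
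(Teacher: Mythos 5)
Your proof is correct and takes essentially the same route as the paper's: both expand each argument in its normal form $mt$ with $m\in M$, $t\in T$ via \eqref{Eq:DeltaMM}, reduce (i) to the antisymmetry of $\delta$ on $T\times T$ from \eqref{Eq:DeltaParams}, reduce (ii) to the additivity of the original $\delta\colon M\times H\to A$ in its $M$-argument together with the boundary values at $t_1=1$, deduce (iii) from centrality of $M$, and note that (iv) is just the already-established additivity on $M\times H$. No substantive difference from the paper's argument.
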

\begin{proof}
(i) We have $\delta(mt,m't') = \delta(m,t')\delta(m',t)^{-1}\delta(t,t')$, and
also $\delta(m't',mt)^{-1} = \delta(m',t)^{-1}\delta(m,t')\delta(t',t)^{-1}$.
Hence we are done by $\delta(t,t') = \delta(t',t)^{-1}$ of
\eqref{Eq:DeltaParams}.

(ii) Let $u=nt$, $v=n't'$, where $n$, $n'\in M$ and $t$, $t'\in T$. Then
$\delta(u,mv) = \delta(nt,(mn')t') = \delta(n,t')\delta(mn',t)^{-1}
\delta(t,t')$. On the other hand, $\delta(u,m)\delta(u,v)$ is equal to
$\delta(nt,m)\delta(nt,n't') = \delta(m,t)^{-1}\delta(n,t')\delta(n',t)^{-1}
\delta(t,t')$. Since $\delta(m,t)\delta(n',t) = \delta(mn',t)$, we are done.

Part (iii) is an immediate consequence of (ii), $M\subseteq Z(H)$, and the fact
that $A$ is abelian. We have already observed (iv).
\end{proof}

\subsection{Constructing $\mu$}

We now need a map $\mu:H\times H\to A$ such that
\begin{align}
    &\delta(u,v) = \mu(u,v)\mu(v,u)^{-1}\text{ for every $u$, $v\in H$},\notag\\
    &\mu(uv,w) = \mu(u,w)\mu(v,w)\text{ if $\{u,v,w\}\cap M\ne\emptyset$,}\label{Eq:WantMu}\\
    &\mu(u,vw) = \mu(u,v)\mu(u,w)\text{ if $\{u,v,w\}\cap
    M\ne\emptyset$.}\notag
\end{align}

Let $T=\{t_1,\dots,t_k\}$ be the same transversal for $M$ in $H$ as above.
Define $\mu:M\cup T\times M\cup T\to A$ as follows:
\begin{align}
    &\mu(t_1,t_1) = 1,\notag\\
    &\mu(t_i,t_i)\text{ arbitrary, for $1<i\le k$},\notag\\
    &\mu(t_i,t_j)=\delta(t_i,t_j)\text{ if $1\le i<j\le k$},\notag\\
    &\mu(t_j,t_i)=1\text{ if $1\le i<j\le k$},\label{Eq:MuParams}\\
    &\mu(m,n)=1\text{ for $m$, $n\in M$},\notag\\
    &\mu(m,t)=\delta(m,t)\text{ for $m\in M$, $t\in T$},\notag\\
    &\mu(t,m)=1\text{ for $m\in M$, $t\in T$}.\notag
\end{align}
Since $\delta(M,1) = 1$, $\mu:M\cup T\times M\cup T\to A$ is well-defined.
Extend $\mu:M\cup T\times M\cup T\to A$ to $\mu:H\times H\to A$ by
\begin{equation}
    \mu(mt,m't') = \mu(m,t')\mu(t,t'),\label{Eq:MuMM}
\end{equation}
where $m$, $m'\in M$, $t$, $t'\in T$.

\begin{lemma} $\delta(u,v) = \mu(u,v)\mu(v,u)^{-1}$ for every $u$, $v\in H$.
\end{lemma}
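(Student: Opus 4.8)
The plan is to reduce the asserted identity to a single computation on the transversal $T$, where it follows immediately from the antisymmetry built into \eqref{Eq:DeltaParams}. First I would use the fact that every element of $H$ is uniquely of the form $mt$ with $m\in M$ and $t\in T$, and write $u=mt$, $v=m't'$. Applying the extension formula \eqref{Eq:MuMM} to both $\mu(u,v)=\mu(mt,m't')$ and $\mu(v,u)=\mu(m't',mt)$ yields
\begin{equation*}
    \mu(u,v)\mu(v,u)^{-1} = \mu(m,t')\,\mu(m',t)^{-1}\,\mu(t,t')\,\mu(t',t)^{-1},
\end{equation*}
where I have reordered the factors using that $A$ is abelian. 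By the defining values in \eqref{Eq:MuParams}, $\mu(m,t')=\delta(m,t')$ and $\mu(m',t)=\delta(m',t)$, so the right-hand side equals $\delta(m,t')\,\delta(m',t)^{-1}\,\mu(t,t')\,\mu(t',t)^{-1}$. Comparing this with the definition \eqref{Eq:DeltaMM} of $\delta(mt,m't')$, it therefore suffices to prove the single identity $\mu(t,t')\mu(t',t)^{-1}=\delta(t,t')$ for all $t,t'\in T$.

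This last identity I would verify by a case analysis on the enumeration $T=\{t_1,\dots,t_k\}$, writing $t=t_a$ and $t'=t_b$. If $a<b$, then \eqref{Eq:MuParams} gives $\mu(t_a,t_b)=\delta(t_a,t_b)$ and $\mu(t_b,t_a)=1$, so the product is $\delta(t,t')$ outright. If $a>b$, then $\mu(t_a,t_b)=1$ and $\mu(t_b,t_a)=\delta(t_b,t_a)$, so the product equals $\delta(t_b,t_a)^{-1}$, which equals $\delta(t_a,t_b)=\delta(t,t')$ by the antisymmetry $\delta(t_j,t_i)=\delta(t_i,t_j)^{-1}$ of \eqref{Eq:DeltaParams}. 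If $a=b$, both $\mu$ factors are the same (arbitrary) diagonal value $\mu(t_a,t_a)$ and hence cancel, while $\delta(t_a,t_a)=1$ by \eqref{Eq:DeltaParams}, so both sides are trivial.

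There is no genuine obstacle here; the statement is a bookkeeping identity. The only points that require a little care are that the arbitrary diagonal values $\mu(t_i,t_i)$ do not interfere, which holds because they appear symmetrically and cancel, and that the reverse-order case is absorbed precisely by the antisymmetry of $\delta$ on $T$. Once $\mu(t,t')\mu(t',t)^{-1}=\delta(t,t')$ is in hand, substituting back gives $\mu(u,v)\mu(v,u)^{-1}=\delta(m,t')\delta(m',t)^{-1}\delta(t,t')=\delta(mt,m't')=\delta(u,v)$, which completes the proof.
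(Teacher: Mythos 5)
Your proposal is correct and follows essentially the same route as the paper's own proof: decompose $u=mt$, $v=m't'$, expand both sides via \eqref{Eq:MuMM} and \eqref{Eq:DeltaMM}, reduce to the identity $\mu(t,t')\mu(t',t)^{-1}=\delta(t,t')$ on the transversal, and settle that by the same three-case analysis on the indices.
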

\begin{proof}
By the definitions \eqref{Eq:DeltaMH}, \eqref{Eq:DeltaMM} and \eqref{Eq:MuMM},
$\delta(mt,m't') = \delta(m,t')\delta(m',t)^{-1}\delta(t,t')$, and
\begin{multline*}
    \mu(mt,m't')\mu(m't',mt)^{-1}\\ = \mu(m,t')\mu(t,t')
        \mu(m',t)^{-1}\mu(t',t)^{-1} = \delta(m,t')\mu(t,t')
            \delta(m',t)^{-1}\mu(t',t)^{-1}.
\end{multline*}
Hence the desired equality holds if and only if $\mu(t,t')\mu(t',t)^{-1} =
\delta(t,t')$.

Let $t=t_i$, $t'=t_j$. If $i=j$ then $\mu(t_i,t_i)\mu(t_i,t_i)^{-1}=1 =
\delta(t_i,t_i)$. If $i<j$ then $\mu(t_i,t_j)\mu(t_j,t_i)^{-1} =
\delta(t_i,t_j)$. If $i>j$ then $\mu(t_i,t_j)\mu(t_j,t_i)^{-1} =
\delta(t_j,t_i)^{-1} = \delta(t_i,t_j)$.
\end{proof}

\begin{lemma} The following properties hold for $\mu:H\times H\to A$, $u$,
$v\in H$, and $m\in M$:
\begin{enumerate}
\item[(i)] $\mu(mu,v) = \mu(m,v)\mu(u,v)$,
\item[(ii)] $\mu(um,v) = \mu(u,v)\mu(m,v)$,
\item[(iii)] $\mu(uv,m) = \mu(u,m)\mu(v,m)$,
\item[(iv)] $\mu(u,mv) = \mu(u,m)\mu(u,v)$,
\item[(v)] $\mu(u,vm) = \mu(u,v)\mu(u,m)$,
\item[(vi)] $\mu(m,uv) = \mu(m,u)\mu(m,v)$.
\end{enumerate}
\end{lemma}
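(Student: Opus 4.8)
The plan is to reduce every one of the six identities to the explicit extension formula \eqref{Eq:MuMM} together with the additivity of $\delta$ on $M$ recorded after \eqref{Eq:DeltaMH}. Writing $u$, $v$ in their standard forms $u=nt$ and $v=m't'$, with $n$, $m'\in M$ and $t$, $t'\in T$, formula \eqref{Eq:MuMM} reads $\mu(u,v)=\mu(n,t')\mu(t,t')$. The structural point I would emphasize first is that the right-hand side does not involve $m'$: thus $\mu(u,v)$ depends on its second argument only through the transversal component $t'$. From this and \eqref{Eq:MuParams} I would extract two base facts. For $m\in M$ and $u=nt\in H$ we get $\mu(m,u)=\mu(m,t)\mu(t_1,t)=\delta(m,t)=\delta(m,u)$, using $\mu(t_1,t)=1$ (from $\delta(t_1,t_j)=1$) and $\delta(m,n)=1$; dually, for every $u\in H$ and $m\in M$ the transversal component of $m$ is $t_1$, so $\mu(u,m)=\mu(n,t_1)\mu(t,t_1)=1$.

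With these facts the bulk of the lemma is immediate. For (i), write $mu=(mn)t$, so that \eqref{Eq:MuMM} gives $\mu(mu,v)=\mu(mn,t')\mu(t,t')$; since $mn\in M$ the base fact yields $\mu(mn,t')=\delta(mn,t')=\delta(m,t')\delta(n,t')$ by additivity of $\delta$ on $M$, and this regroups precisely as $\mu(m,v)\mu(u,v)$. Part (ii) follows from (i) because $um=mu$ (as $M\le Z(H)$) and $A$ is commutative. In (iii) the second argument lies in $M$, so all three of $\mu(uv,m)$, $\mu(u,m)$, $\mu(v,m)$ equal $1$ and the identity is trivial. For (iv) and (v) the element $mv$ (equivalently $vm$, since $m$ is central) has the same transversal component as $v$, so $\mu(u,mv)=\mu(u,v)$, while $\mu(u,m)=1$; this settles both identities at once.

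The one part that carries genuine content is (vi), and it is where I expect what little difficulty there is to reside. Using the base fact $\mu(m,\cdot)=\delta(m,\cdot)$ on $M\times H$, the claim $\mu(m,uv)=\mu(m,u)\mu(m,v)$ becomes $\delta(m,uv)=\delta(m,u)\delta(m,v)$, which is exactly Lemma \ref{Lm:delta2}(iv). The subtlety worth flagging is that one cannot prove (vi) by naive tracking of transversal components: the product $tt'$ of two representatives need not lie in $T$ but only in a coset $m''t''$, and reconciling $\mu(m,uv)=\delta(m,t'')$ with $\delta(m,t)\delta(m,t')$ would require the relation $tt'=m''t''$ together with $\delta(m,m'')=1$ coming from $\delta(M,M)=1$. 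Invoking Lemma \ref{Lm:delta2}(iv) packages all of this, so once that additivity is granted the proof is complete.
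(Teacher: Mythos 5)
Your proposal is correct and follows essentially the same route as the paper: both rest on the two observations that $\mu(mt,m't')$ is independent of $m'$ and that $\mu(H,M)=1$, reduce (i) to the additivity of $\delta(\cdot,t')$ on $M$, dispose of (ii)--(v) by centrality of $M$ and these base facts, and handle (vi) via the identification $\mu(m,\cdot)=\delta(m,\cdot)$ together with the additivity $\delta(m,uv)=\delta(m,u)\delta(m,v)$ (which the paper re-derives inline through $t't''=m^*t^*$ rather than citing Lemma \ref{Lm:delta2}(iv), but the content is identical).
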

\begin{proof}
Since $\mu(m,1)=\mu(t,1) = 1$ for every $m\in M$, $t\in T$, we have
$\mu(H,M)=1$. Also, by definition \eqref{Eq:MuMM}, $\mu(mt,m't')$ does not
depend on $m'$. We will use these properties and Lemma \ref{Lm:delta2} without
reference in this proof. For (i),
\begin{align*}
    \mu(m\cdot m't',m''t'')&= \mu(mm't',t'') = \mu(mm',t'')\mu(t',t'') =
    \delta(mm',t'')\mu(t',t'')\\
    &= \delta(m,t'')\delta(m',t'')\mu(t',t'') =
    \mu(m,t'')\mu(m',t'')\mu(t',t'')\\
    &= \mu(m,t'')\mu(m't',t'') = \mu(m,m''t'')\mu(m't',m''t'').
\end{align*}
(ii) follows from (i) since $m\in Z(H)$ and $A$ is commutative. (iii) follows
from $\mu(H,M)=1$. For (iv), $\mu(u,mv)=\mu(u,v)$ and $\mu(u,m)=1$. (v) follows
from (iv) since $m\in Z(H)$ and $A$ is commutative.

For (vi), let $u=m't'$, $v=m''t''$. Then $\mu(m,uv) = \mu(m,t't'')$ and
$\mu(m,u)\mu(m,v) = \mu(m,t')\mu(m,t'')$. With $t't'' = m^*t^*$, $\mu(m,t't'')
= \mu(m,m^*t^*) = \mu(m,t^*) = \delta(m,t^*) = \delta(m,m^*)\delta(m,t^*) =
\delta(m,m^*t^*) = \delta(m,t't'') = \delta(m,t')\delta(m,t'') =
\mu(m,t')\mu(m,t'')$.
\end{proof}

\section{Explicit examples}\label{Sc:Explicit}

The minimal situation of Section \ref{Sc:NilpClassTwo} in which $f:(H/H')^3\to
A$ is nontrivial occurs when $H/H'$ is a vector space of dimension three over
$A=\{1,-1\}$, and $f$ is the unique (up to equivalence) nontrivial symmetric
trilinear alternating form, i.e., $f$ is the determinant.

The commutator subgroup $H'$ is then also of dimension three, with basis
$\{[e_1,e_2]$, $[e_1,e_3]$, $[e_2,e_3]\}$, and so $|H|=64$. One might wonder if
there is any group $H$ satisfying all these requirements. An inspection of the
GAP libraries of small groups shows that there are precisely $10$ such groups.

Furthermore, there are $21+7=28$ free parameters \eqref{Eq:DeltaParams} and
\eqref{Eq:MuParams} used in constructing $\delta$ and $\mu$ from $f$.
Altogether, when the direct product $G=A\times H$ is used, the procedure of
Section \ref{Sc:NilpClassTwo} yields $10\cdot 2^{28}$ loops (not necessarily
pairwise nonisomorphic) of order $128$ that are of nilpotency class three and
have commuting inner mappings.

Throughout this section, let $H$ be a group such that $H/H'$ is an elementary
abelian $2$-group with basis $\{e_1H',e_2H',e_3H'\}$, and $H'$ is an elementary
abelian group with basis $\{[e_1,e_2],[e_1,e_3],[e_2,e_3]\}$. Furthermore, if
$\mu:H\times H\to A$ is obtained from $f$ by the procedure of Section
\ref{Sc:NilpClassTwo}, let $\mathcal C(H,\mu)$ denote the resulting loop $Q$
defined on $G=A\times H$ via \eqref{Eq:Q}.

\subsection{The loop $C$}\label{Ss:Reconstruction}

The loop $C$ is obtained as follows: Let $H$ be the first suitable group in the
GAP library of small groups, i.e., $H$ is presented by
\begin{align*}
    H=\langle g_1, g_2, g_3, g_4, g_5, g_6;\;&
        g_i^2=1\text{ for every $1\le i\le 6$},\\
        &(g_ig_j)^2=1\text{ for every $1\le j<i\le 6$, except for}\\
        &(g_2g_1)^2g_4 = (g_3g_1)^2g_5 = (g_3g_2)^2g_6 = 1\rangle,
\end{align*}
and let all the parameters for $\delta$ and $\mu$ be equal to $1$. Then
$\mathcal C(H,\mu)$ is isomorphic to $C$.

This shows: (i) the deep insight of Cs\"org\H{o} in constructing $C$, (ii) that
the construction by group modifications is highly relevant to the problem at
hand, (iii) that $C$ is very natural among loops of nilpotency class three with
commuting inner mappings.

\subsection{About the isomorphism problem}

Different choices of $H$ and of the parameters for $\delta$ and $\mu$ produce
generally nonisomorphic loops. We do not wish to pursue the general isomorphism
problem here, but we offer some evidence that the number of loops $\mathcal
C(H,\mu)$ is very large.

\begin{lemma}\label{Lm:SmallCenter}
Let $Q=\mathcal C(H,\mu)$. Then $Z(Q) = A\times 1$.
\end{lemma}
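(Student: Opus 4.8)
The plan is to prove $Z(Q) = A\times 1$ by establishing two inclusions. The inclusion $A\times 1 \subseteq Z(Q)$ should be essentially immediate: the construction of Section \ref{Sc:NilpClassTwo} sets up $G = A\times H$ with $A\le Z$, so by Lemma \ref{Lm:ZInCenters} we already know $A\times 1 \le Z\le Z(Q)$. Thus the real content is the reverse inclusion $Z(Q)\subseteq A\times 1$, i.e.\ showing that any element of $Q$ outside $A\times 1$ fails to be central.

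For the reverse inclusion, I would use the fact that $G/Z\cong Q/Z$ is a group (Lemma \ref{Lm:ZInCenters}) together with the explicit formula for the multiplication $*$. An element $g = (\varepsilon, h)\in G$ with $h\notin H'\cdot(\text{scalar part})$ will be non-central essentially because $H$ itself has trivial-enough center relative to $A$. More precisely, I would take a prospective central element and test it against suitable translations. The cleanest approach is probably to use Lemma \ref{Lm:Conjugations}, which gives $T(x)y = y^x\mu(y,x)\mu(x,y^x)^{-1}$: an element $y$ is central only if $T(x)y = y$ for all $x$ (middle-inner-mapping fixing) \emph{and} $y*x = x*y$ for all $x$. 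The condition $y*x = x*y$ unwinds to $y^x = y\cdot\delta(y,x)$ after accounting for the $\mu$-twist, so centrality in $Q$ forces both $y^x = y$ in $G$ (up to the $A$-valued correction) and the vanishing of the relevant $\delta$-values. Since $Z(H) = H'$ and $f$ is nontrivial, these conditions should pin $h$ down to lie in $H'$, and then a second round using the trilinear form $f$ (via $\delta([u,v],w) = f(u,v,w)$) rules out the nontrivial elements of $H'$ as well.

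The key computational step is to show that if $h\notin 1$ in $H$, then either some conjugation $h^x\ne h$ detects non-centrality directly, or, when $h\in H' = Z(H)$ is a nontrivial commutator, some value $\delta(h,x)=\delta([u,v],x)=f(u,v,x)\ne 1$ witnesses $h*x\ne x*h$. For the latter I would invoke the nondegeneracy built into the determinant form $f$: for each nontrivial $h\in H'$, writing $h$ in the basis $\{[e_1,e_2],[e_1,e_3],[e_2,e_3]\}$, one can find a generator $e_k$ with $f(\cdot,\cdot,e_k)$ not annihilating $h$. This is where the concrete choice of $f$ (the determinant in dimension three) does the work.

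The main obstacle I anticipate is bookkeeping the $\mu$-corrections carefully enough to separate the ``group-theoretic'' non-centrality in $H$ from the ``$\delta$-theoretic'' non-centrality coming from $f$; the twisting by $\mu$ means that $y*x = x*y$ is not simply $y^x = y$, and one must track the $A$-valued factors $\mu(y,x)\mu(x,y)^{-1} = \delta(y,x)$ and the conjugation correction from Lemma \ref{Lm:Conjugations} simultaneously. Once it is clear that the $A$-component is free (it is central) while any nontrivial $H$-component is detected by either conjugation or by $f$ through $\delta$, the equality $Z(Q) = A\times 1$ follows. I expect the argument to reduce, after the preliminary reductions, to the single clean statement that $f$ nontrivial implies no nonidentity element of $H$ can lie in $Z(Q)$.
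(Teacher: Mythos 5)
Your proposal matches the paper's proof in essence: both establish $A\times 1\le Z(Q)$ via Lemma \ref{Lm:ZInCenters}, then observe that centrality of $(a,h)$ forces $h\in Z(H)=H'$ together with $\delta(h,k)=1$ for all $k$, and finally rule out nontrivial $h\in H'$ by producing a generator $e_k$ with $\delta(h,e_k)=f(\cdot,\cdot,e_k)\ne 1$ using the determinant form. The paper's computation is a bit more direct (it reads the commutation condition straight off the product formula $(ab\mu(h,k),hk)=(ba\mu(k,h),kh)$ rather than routing through $T(x)$ and Lemma \ref{Lm:Conjugations}), but the argument is the same.
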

\begin{proof}
By Lemma \ref{Lm:ZInCenters}, $A\le Z(Q)$. Now, $(a,h)\in Q$ commutes with
$(b,k)\in Q$ if and only if $(ab\mu(h,k),hk) = (ba\mu(k,h),kh)$. Thus $(a,h)\in
Z(Q)$ if and only if $h\in Z(H)=H'$ and $\delta(h,k)=1$ for every $k\in H$.

The form $f$ is determined by its values $f(e_i,e_j,e_k)$, where $1\le i$, $j$,
$k\le 3$, and we can assume without loss of generality that $f(e_i,e_j,e_k)=-1$
if and only $|\{i,j,k\}|=3$.

Suppose that $h\ne 1$. Then it is always possible to find $k$ such that
$\delta(h,k)=1$ leads to a contradiction. For instance, when
$h=[e_1,e_2][e_1,e_3]$, we let $k = e_3$, and calculate $1 = \delta(h,k) =
\delta([e_1,e_2][e_1,e_3],e_3) = \delta([e_1,e_2],e_3)\delta([e_1,e_3],e_3) =
f(e_1,e_2,e_3)f(e_1,e_3,e_3) = (-1)1 = -1$. The remaining $6$ cases are left to
the reader.
\end{proof}

\begin{lemma}\label{Lm:NotIso}
If $H_1$, $H_2$ are not isomorphic then $\mathcal C(H_1,\mu_1)$, $\mathcal
C(H_2,\mu_2)$ are not isomorphic.
\end{lemma}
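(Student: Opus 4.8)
The plan is to recover $H$ from the loop $\mathcal C(H,\mu)$ as its central quotient, so that an isomorphism of loops would force an isomorphism of the associated groups. Write $Q_i = \mathcal C(H_i,\mu_i)$, a loop defined on $G_i = A\times H_i$. First I would pin down the center using Lemma~\ref{Lm:SmallCenter}, which gives $Z(Q_i) = A\times 1$. By Lemma~\ref{Lm:ZInCenters}, the subgroup $A\times 1$ lies in $Z(Q_i)$ and the quotient $G_i/(A\times 1)\cong Q_i/(A\times 1)$ is a \emph{group}. Combining these two facts yields that $Q_i/Z(Q_i)$ is a group and
\[
    Q_i/Z(Q_i) = Q_i/(A\times 1) \cong G_i/(A\times 1) = (A\times H_i)/(A\times 1) \cong H_i .
\]

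Next I would invoke the elementary principle that a loop isomorphism preserves every notion defined purely from the loop operation, in particular the three nuclei and the commutators, and therefore the center. Thus any isomorphism $\varphi\colon Q_1\to Q_2$ satisfies $\varphi(Z(Q_1)) = Z(Q_2)$, so $\varphi$ descends to an isomorphism $\bar\varphi\colon Q_1/Z(Q_1)\to Q_2/Z(Q_2)$ of the quotient loops. Since both quotients are in fact groups by the previous paragraph, $\bar\varphi$ is a group isomorphism, and the displayed identifications then give $H_1\cong H_2$. Taking the contrapositive proves the lemma.

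I expect the argument to be essentially immediate once Lemmas~\ref{Lm:ZInCenters} and \ref{Lm:SmallCenter} are in hand; the only point needing a word of care is that $\varphi$ genuinely carries $Z(Q_1)$ onto $Z(Q_2)$, so that the induced map on central quotients is well defined, and this holds because $Z(\,\cdot\,)$ depends only on the loop multiplication. There is no substantive obstacle beyond this bookkeeping: the entire content is that the non-associative datum $\mu$ is washed out on passing to $Q/Z(Q)$, which leaves precisely the group $H$.
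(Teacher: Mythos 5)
Your argument is correct and is essentially the paper's own proof: both identify $Z(\mathcal C(H,\mu)) = A\times 1$ via Lemma~\ref{Lm:SmallCenter}, use Lemma~\ref{Lm:ZInCenters} to see $\mathcal C(H,\mu)/Z(\mathcal C(H,\mu))\cong H$, and conclude that an isomorphism of the loops would induce an isomorphism $H_1\cong H_2$. You merely spell out the (correct) bookkeeping that the paper leaves implicit.
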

\begin{proof}
Let $Q_i=\mathcal C(H_i,\mu_i)$. Assume, for a contradiction, that $Q_1\cong
Q_2$. By Lemma \ref{Lm:SmallCenter}, the two centers $Z(Q_1)$, $Z(Q_2)$ are
equal to $A\times 1$, and thus $H_1\cong Q_1/Z(Q_1)\cong Q_2/Z(Q_2)\cong H_2$,
by Lemma \ref{Lm:ZInCenters}.
\end{proof}

In order to further demonstrate the multitude of nonisomorphic loops $\mathcal
C(H,\mu)$, we conducted two experiments.

First, we let $H$ be the group of Subsection \ref{Ss:Reconstruction}, set all
parameters \eqref{Eq:MuParams} of $\mu$ to $1$, and chose $\delta$ so that
precisely one parameter of \eqref{Eq:DeltaParams} was nontrivial. It turns out
that the resulting $21$ loops are pairwise nonisomorphic.

Second, we attempted to estimate the probability that $\mathcal C(H,\mu_1)$,
$\mathcal C(H,\mu_2)$ are isomorphic, if the parameters for $\mu_1$ and $\mu_2$
are chosen at random. Let $X$ be an $n$-element set partitioned into $k$
nonempty blocks, and let $p$ be the probability that two randomly chosen
elements of $X$ belong to the same block. When $n$ and $k$ are fixed, $p$ is
minimized when all blocks have the same size $n/k$, in which case $p = 1/k$.
None of the $2500$ random pairs that we tested consisted of isomorphic loops.
We can therefore conclude with some confidence that $p<1/2500$, and,
consequently, that there are at least $2500$ pairwise nonisomorphic loops with
the desired properties. Of course, it is reasonable to expect that $k$ is much
larger. We did not check more random pairs since the test for isomorphism is
time-consuming.

\subsection{Multiplication groups and inner mapping groups of loops $\mathcal C(H,\mu)$}

The original construction of Cs\"{o}rg\H{o} is based on multiplication groups,
so we look at them more closely.

The multiplication groups of loops $\mathcal C(H,\mu)$ can have different
orders. Let $\mu_0$ denote the mapping obtained when all parameters
\eqref{Eq:DeltaParams} and \eqref{Eq:MuParams} are trivial, and $\mu_1$ the
mapping obtained when all parameters are trivial except for
$\mu_1(t_2,t_2)=-1$. Let $H$ be any of the $10$ suitable groups, and let
$Q_i=\mathcal C(H,\mu_i)$. Then $|\mlt Q_0| = 2^{13}$ and $|\mlt Q_1| =
2^{17}$. We also came across a loop $\mathcal C(H,\mu)$ with multiplication
group of order $2^{16}$.

It appears that $|\mlt \mathcal C(H,\mu)| \ge 2^{13}$, and that the equality
holds if and only if $\mu$ is trivial.

Even if their orders agree, the multiplication groups need not be isomorphic.
For instance, $\mlt C$ is not isomorphic to any $\mlt \mathcal(H^*,\mu_0)$ when
$H^*$ is a group different from $H$ of Subsection \ref{Ss:Reconstruction}.

On the other hand, the structure of the inner mapping group of $\mathcal
C(H,\mu)$ is clear:

\begin{proposition}
$\inn \mathcal C(H,\mu)$ is an elementary abelian $2$-group.
\end{proposition}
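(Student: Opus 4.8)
The plan is to combine a fact that is already in hand with three short order computations. Since the mappings $\mu$ and $\delta$ produced in Section \ref{Sc:NilpClassTwo} satisfy \eqref{Eq:C1}--\eqref{Eq:C3} with $f$ nontrivial, Proposition \ref{Pr:InnAbelian} tells us that $\inn Q$ is already abelian (and Corollary \ref{Cr:23} confirms that $Q$ has nilpotency class three while $H$ has class two). In an abelian group it suffices to show that each of the generators $L(x,y)$, $R(x,y)$, $T(x)$ has order dividing two. First I would record, once and for all, that here $A=\{1,-1\}=Z$ has exponent two, that $A\le N\le N(Q)$ by Lemma \ref{Lm:TLRCommute}, and that associators take their values in $A$ by \eqref{Eq:Associator}; these facts make the $L$- and $R$-computations trivial and feed into the $T$-computation as well.

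For the left and right generators the work is immediate from identities already derived. Specialising \eqref{Eq:TwoLeftInner} to $(u,v)=(x,y)$ gives $L(x,y)^2z=z[x,y,z]^2=z$, because $[x,y,z]\in A$ and $A$ has exponent two; likewise \eqref{Eq:TwoRightInner} yields $R(x,y)^2=\mathrm{id}$. Hence every $L$- and $R$-generator is an involution, and only the middle generators remain.

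The hard part, as I expect, is $T(x)$, and this is where the structure of $H$ enters decisively. Starting from $T(x)y=y^x\mu(y,x)\mu(x,y^x)^{-1}$ of Lemma \ref{Lm:Conjugations} and applying $T(x)$ a second time, the pivotal observation is that $x^2$ is central in $G=A\times H$: its $A$-component is trivial since $A$ has exponent two, and its $H$-component lies in $H'=Z(H)$ since $H/H'$ has exponent two, whence $y^{x^2}=y$. Because the scalar corrections all lie in $A\le K$ and therefore do not alter cosets modulo $K$, pushing them through the second application of $T(x)$ collapses the resulting four $\mu$-values into $T(x)^2y=y\,\delta(y,x)\,\delta(y^x,x)$. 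Now $y^x=y[y,x]$ with $[y,x]\in M=H'$, so the additivity of $\delta$ (Lemma \ref{Lm:delta2}) gives $\delta(y^x,x)=\delta(y,x)\,\delta([y,x],x)$; cancelling $\delta(y,x)^2=1$ leaves $T(x)^2y=y\,\delta([y,x],x)$. Finally $\delta([y,x],x)=f(y,x,x)=1$ because $f$ is alternating, so $T(x)^2=\mathrm{id}$. With all three families of generators shown to be involutions and $\inn Q$ abelian, it follows that $\inn\mathcal C(H,\mu)$ is elementary abelian of exponent two, as claimed.
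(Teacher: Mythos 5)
Your proposal is correct and follows essentially the same route as the paper: $L(x,y)$ and $R(x,y)$ are involutions because associators lie in the exponent-two group $A$, and $T(x)^2z=z\,\delta(z,x)\,\delta(z^x,x)=z\,\delta([z,x],x)=z\,f(z,x,x)=z$ using $x^2\in Z(G)$ and the alternating property of $f$. The only cosmetic difference is that you rederive the specialization of \eqref{Eq:TwoMiddleInner} at $y=x$ directly from Lemma \ref{Lm:Conjugations} instead of citing that displayed formula.
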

\begin{proof}
Let $Q=\mathcal C(H,\mu) = A\times H$. It suffices to show that the left,
right, and middle inner mappings of $Q$ are involutions, since we already know
that $\inn Q$ is abelian.

By \eqref{Eq:Associator}, $[x,y,z]\in Z=A$ for every $x$, $y$, $z\in Q$. By
\eqref{Eq:TwoLeftInner} and \eqref{Eq:TwoRightInner}, $L(x,y)^2z = z[x,y,z]^2 =
z$ and $R(x,y)^2z = z[z,x,y]^2 = z$, since $A$ is of exponent two.

By \eqref{Eq:TwoMiddleInner},
\begin{displaymath}
    T(x)^2z = x^{-2}zx^2\mu(z^x,x)\mu(x,z^{x^2})^{-1}\mu(z,x)\mu(x,z^x)^{-1}.
\end{displaymath}
Since $x^2\in Z(G)$, we can rewrite this as
\begin{align*}
    T(x)^2z&=z\mu(z^x,x)\mu(x,z)^{-1}\mu(z,x)\mu(x,z^x)^{-1}=z\delta(z^x,x)\delta(z,x)\\
        &= z\delta(z[z,x],x)\delta(z,x)=z\delta(z,x)^2\delta([z,x],x) = z\delta([z,x],x).
\end{align*}
As $\delta([z,x],x) = f(z,x,x) = 1$, we are done.
\end{proof}

\section{Open problems}\label{Sc:Open}

These are the main open problems of interest here:

\begin{problem} Let $Q$ be a loop of nilpotency class at least three with
abelian group of inner mappings.
\begin{enumerate}
\item[(i)] Can the nilpotency class of $Q$ be bigger than three?

\item[(ii)] Can $|Q|$ be less than $128$?

\item[(iii)] Can $|Q|$ be odd?

\item[(iv)] Can $Q$ be constructed by the modifications of Section
\emph{\ref{Sc:Modifications}}?

\item[(v)] Can $Q$ be Moufang?

\item[(vi)] Can $|\mlt Q|$ be less than $8192$?
\end{enumerate}
\end{problem}

While this paper was under review, G.~P.~Nagy and the second author constructed
a Moufang loop of nilpotency class three and with commuting inner mappings,
hence solving (v).

\end{document}